  \theoremstyle{plain}
    \newtheorem{thm}{Theorem}[section]
    \newtheorem{prop}[thm]{Proposition}
    \newtheorem{corollary}[thm]{Corollary}
    \newtheorem{subsec}[thm]{}
\theoremstyle{definition}
    \newtheorem{defn}[thm]{Definition}
        \newtheorem{remark}[thm]{Remark}
    \newtheorem{exam}[thm]{Example}
\theoremstyle{remark}
\title{}
\author{}
\date{}
\begin{document}
\title{Cohomology and deformations of twisted Rota-Baxter operators and NS-algebras}
\author{Apurba Das}
\address{Department of Mathematics and Statistics,
Indian Institute of Technology, Kanpur 208016, Uttar Pradesh, India.}
\email{apurbadas348@gmail.com}

\subjclass[2010]{16E40, 16S80, 16W99, 17A99.}
\keywords{Twisted Rota-Baxter operators, Cohomology, Deformations, NS-algebras, Non-symmetric operads}

\begin{abstract}
The aim of this paper is twofold. In the first part, we consider twisted Rota-Baxter operators on associative algebras that were introduced by Uchino as a noncommutative analogue of twisted Poisson structures. We construct an $L_\infty$-algebra whose Maurer-Cartan elements are given by twisted Rota-Baxter operators. This leads to cohomology associated to a twisted Rota-Baxter operator. This cohomology can be seen as the Hochschild cohomology of a certain associative algebra with coefficients in a suitable bimodule. We study deformations of twisted Rota-Baxter operators by means of the above-defined cohomology. Application is given to Reynolds operators. In the second part, we consider NS-algebras of Leroux that are related to twisted Rota-Baxter operators in the same way dendriform algebras are related to Rota-Baxter operators. We define cohomology of NS-algebras using multiplicative operads and study their deformations in terms of the cohomology.
\end{abstract}

\maketitle

\tableofcontents






\section{Introduction}
The notion of Rota-Baxter operators first appeared in the fluctuation theory of probability in a paper of G. Baxter \cite{baxter}. After that, Rota-Baxter operators were found connections with combinatorics by G.-C. Rota \cite{rota} and P. Cartier \cite{cart}. A Rota-Baxter operator is an algebraic abstraction of the integral operator, i.e., inverse operator of an (invertible) derivation. In \cite{aguiar} Aguiar showed that Rota-Baxter operators produce Loday's dendriform algebras. They have also important applications in the Connes-Kreimer's algebraic approach of the renormalization in quantum field theory \cite{conn}. In the last twenty years, Rota-Baxter operators and related structures are widely studied in the literature. See \cite{guo-book} for details. Recently, deformations of Rota-Baxter operators on Lie algebras has been developed in \cite{tang}. The results have been generalized to associative algebras by the author in \cite{das-rota}.

\medskip

A generalization of a Rota-Baxter operator in the presence of a bimodule was introduced by K. Uchino \cite{uchino} by the name of generalized Rota-Baxter operator (also known as relative Rota-Baxter operator or $\mathcal{O}$-operator).  Let $A$ be an associative algebra and $M$ be an $A$-bimodule. A linear map $T: M \rightarrow A$ is said to be a generalized Rota-Baxter operator if $T$ satisfies
\begin{align*}
T(u) T(v) = T ( T(u) \cdot v + u \cdot T(v)),~ \text{ for } u, v \in M.
\end{align*} 
He showed that generalized Rota-Baxter operators can be seen as an operator analogue of Poisson structures. This led him to introduce a notion of $H$-twisted Rota-Baxter operators as an operator analogue of twisted Poisson structures of \v{S}evera and Weinstein \cite{sev-wein}. Here $H$ is a Hochschild $2$-cocycle of $A$ with coefficients in the bimodule $M$. A Reynolds operator \cite{zhang-gao-guo,reynolds} on $A$ is a $(- \mu)$-twisted Rota-Baxter operator, where $\mu$ defines the associative multiplication on $A$. Uchino also observed that an $H$-twisted Rota-Baxter operator induces an NS-algebra of Leroux \cite{leroux} in the same way a Rota-Baxter operator induces a dendriform algebra structure.

\medskip

Our aim in this paper is to study cohomology and deformations of $H$-twisted Rota-Baxter operators and NS-algebras. At first, using a graded Lie algebra constructed in \cite{das-rota} (whose Maurer-Cartan elements are Rota-Baxter operators), here we construct an $L_\infty$-algebra whose Maurer-Cartan elements are $H$-twisted Rota-Baxter operators. This characterization suggests us to define a cohomology theory to any $H$-twisted Rota-Baxter operator $T$, called the cohomology of $T$. (See the Appendix at the end for some basics on $L_\infty$-algebras and Maurer-Cartan elements). Next, we view this cohomology as the Hochschild cohomology of a certain algebra with coefficients in a suitable bimodule. More precisely, given an $H$-twisted Rota-Baxter operator $T: M \rightarrow A$, we know from \cite{uchino} that $M$ carries an associative algebra structure. We show that there is an $M$-bimodule structure on $A$. Further, we show that the corresponding Hochschild cohomology is isomorphic to the cohomology of $T$.

\medskip

The classical formal deformation theory of Gerstenhaber \cite{gers} has been applied to Rota-Baxter operators in \cite{tang,das-rota}. Here we further extend it to $H$-twisted Rota-Baxter operators. We show that the linear term in a deformation of an $H$-twisted Rota-Baxter operator $T$ is a $1$-cocycle in the cohomology of $T$. This is called the infinitesimal of the deformation. Moreover, equivalent deformation has cohomologous infinitesimals. We introduce Nijenhuis elements associated with an $H$-twisted Rota-Baxter operator $T$ and find a necessary condition for the rigidity of $T$ in terms of Nijenhuis elements.

\medskip

Next, we focus on NS-algebras. Given a vector space $A$, we first construct a new non-symmetric operad $\mathcal{O}_A$. A multiplication on this operad $\mathcal{O}_A$ is given by an NS-algebra structure on $A$. See \cite{gers-voro,das-loday} for more on non-symmetric operads and multiplications on them.  Thus, an NS-algebra $A$ gives rise to a cochain complex induced from the corresponding multiplicative operad following the approach of Gerstenhaber and Voronov \cite{gers-voro}. The cohomology of this cochain complex is called the cohomology of the NS-algebra $A$. As a consequence of the construction, one can conclude that the cohomology inherits a Gerstenhaber algebra structure in the sense of \cite{gers-ring}. We show that there is a morphism of non-symmetric operads $\mathcal{O}_A \rightarrow \mathrm{End}_A$, where $\mathrm{End}_A$ is the endomorphism operad associated to the vector space $A$. This, in particular, implies that there is a morphism from the cohomology of an NS-algebra $A$ to the Hochschild cohomology of the induced associative algebra structure on $A$. 
Finally, we describe deformations of an NS-algebra by means of the cohomology.

\medskip

The paper is organized as follows. In the next section (section \ref{sec-2}), we recall $H$-twisted Rota-Baxter operators, Reynolds operators and NS-algebras. In section \ref{sec-cohomo-trb}, we first give the Maurer-Cartan characterization of an $H$-twisted Rota-Baxter operator $T$  and using it, we define cohomology of $T$. We also view the cohomology of $T$ as the Hochschild cohomology of a suitable algebra with coefficients in a suitable bimodule.
Deformations of $H$-twisted Rota-Baxter operators are considered in section \ref{sec-def}. In section \ref{sec-ns-alg}, we define cohomology of NS-algebras using multiplicative non-symmetric operads and study deformations of NS-algebras. 

\medskip

All vector spaces, linear maps and tensor products are over a field $\mathbb{K}$ of characteristic $0$. 

\section{Twisted Rota-Baxter operators}\label{sec-2}
In this section, we recall $H$-twisted Rota-Baxter operators, Reynolds operators on associative algebras and NS-algebras \cite{uchino,leroux}. Meanwhile, we also give some constructions of twisted Rota-Baxter operators out of an old one.

Let $A$ be an associative algebra. We denote the multiplication map on $A$ by $\mu : A^{\otimes 2} \rightarrow A$ and write $\mu (a, b ) = ab$, for $a, b \in A$. An $A$-bimodule is a vector space $M$ together with bilinear maps $l : A \otimes M \rightarrow M,~(a, u ) \mapsto a \cdot u$ and $r: M \otimes A \rightarrow M,~(u, a) \mapsto u \cdot a$ (called left and right actions of $A$, respectively) satisfying 
\begin{align*}
(ab) \cdot u  = a \cdot (b \cdot u), \qquad (a \cdot u) \cdot b = a \cdot (u \cdot b), \qquad  (u \cdot a) \cdot b  =  u \cdot (ab ),
\end{align*}
for $a, b \in A$ and $u \in M$. It follows that $A$ is an $A$-bimodule where the left and right actions of $A$ are given by the algebra multiplication. 

The Hochschild cohomology of $A$ with coefficients in the $A$-bimodule $M$ is given by the cohomology of the cochain complex $\{ C^\bullet_{\mathrm{Hoch}}(A, M), \delta_{\mathrm{Hoch}} \}$, where $C^n_{\mathrm{Hoch}}(A, M) = \mathrm{Hom}(A^{\otimes n}, M)$, for $n \geq 0$ and
\begin{align*}
(\delta_{\mathrm{Hoch}} f) (a_1, \ldots, a_{n+1}) =~& a_1 \cdot f(a_2, \ldots, a_{n+1}) 
+ \sum_{i=1}^n (-1)^i f (a_1, \ldots, a_{i-1}, a_i a_{i+1}, \ldots, a_{n+1}) \\
~&+ (-1)^{n+1} f(a_1, \ldots, a_n) \cdot a_{n+1}, \text{ for } f \in C^n_{\mathrm{Hoch}}(A, M).
\end{align*}

Let $H\in C^2_{\mathrm{Hoch}}(A, M)$ be a Hochschild $2$-cocycle, i.e., $H : A^{\otimes 2} \rightarrow M$ satisfies
\begin{align*}
a \cdot H(b,c) - H(ab, c) + H (a, bc) - H(a,b) \cdot c = 0,~ \text{ for all } a, b, c \in A.
\end{align*}

\begin{remark}
Note that a $2$-cocycle $H$ induces an associative algebra structure on $A \oplus M$, called the $H$-twisted semi-direct product (denoted by $A \ltimes_H M$) whose multiplication is given by
\begin{align*}
(a,u) \cdot_H (b, v) = (ab, a \cdot v + u \cdot b + H(a, b)),~ \text{ for } (a,u), (b, v) \in A \oplus M.
\end{align*}
\end{remark}

\begin{defn}\cite{uchino}
A linear map $T : M \rightarrow A$ is said to be an {\bf $H$-twisted Rota-Baxter operator} if $T$ satisfies
\begin{align*}
T(u) T(v) = T(u \cdot T(v) + T(u) \cdot v + H (Tu, Tv)),~ \text{ for } u, v \in M.
\end{align*}
\end{defn}


The following result is straightforward.

\begin{prop}\label{graph-tw}
A linear map $T: M \rightarrow A$ is an $H$-twisted Rota-Baxter operator if and only if the graph of $T$,
\begin{align*}
\mathrm{Gr}(T) = \{ (Tu, u)| u \in M \}
\end{align*}
is a subalgebra of the $H$-twisted semi-direct product $A \ltimes_H M$.
\end{prop}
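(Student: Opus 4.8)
The plan is to unwind both sides of the claimed equivalence directly from the definitions, since the statement is essentially a restatement of the defining identity of an $H$-twisted Rota-Baxter operator in terms of the multiplication $\cdot_H$ on the twisted semi-direct product. First I would observe that $\mathrm{Gr}(T)$ is automatically a linear subspace of $A \oplus M$, so being a subalgebra is equivalent to closure under $\cdot_H$. Thus the entire content is the computation of $(Tu, u) \cdot_H (Tv, v)$ for $u, v \in M$.

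The key step is the computation
\begin{align*}
(Tu, u) \cdot_H (Tv, v) = \big( (Tu)(Tv),~ Tu \cdot v + u \cdot Tv + H(Tu, Tv) \big),
\end{align*}
which is just the definition of $\cdot_H$ applied with $a = Tu$, $b = Tv$. The second component is an arbitrary element $w := Tu \cdot v + u \cdot Tv + H(Tu, Tv)$ of $M$, and an element $(x, w) \in A \oplus M$ lies in $\mathrm{Gr}(T)$ if and only if $x = Tw$. Hence $\mathrm{Gr}(T)$ is closed under $\cdot_H$ if and only if for all $u, v \in M$ we have $(Tu)(Tv) = T\big( Tu \cdot v + u \cdot Tv + H(Tu, Tv) \big)$, which is precisely the $H$-twisted Rota-Baxter identity. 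Conversely, if $T$ satisfies this identity, the same computation shows $(Tu, u) \cdot_H (Tv, v) = (Tw, w) \in \mathrm{Gr}(T)$.

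There is essentially no obstacle here; the only thing to be slightly careful about is that I must use both directions of the equivalence "closed under $\cdot_H$ $\iff$ the identity holds," noting that as $u, v$ range over $M$ the products $Tu, Tv$ need not range over all of $A$, so one should phrase the argument so that the Rota-Baxter identity is tested exactly on pairs of the form $(Tu, Tv)$ — which is exactly what both the subalgebra condition and the operator definition demand, so the two conditions match verbatim. I would present this as a short two- or three-line argument, remarking that the result is immediate from the definitions, consistent with the "following result is straightforward" lead-in.
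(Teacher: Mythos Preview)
Your proposal is correct and is exactly the direct verification the paper has in mind; the paper itself omits the proof entirely, calling the result ``straightforward,'' and your computation of $(Tu,u)\cdot_H(Tv,v)$ together with the observation that $(x,w)\in\mathrm{Gr}(T)$ iff $x=Tw$ is precisely the intended argument.
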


\begin{remark}
Note that a Poisson manifold is a smooth manifold $M$ equipped with a bivector field $\pi \in \Gamma(\wedge^2 TM)$ which induces a Poisson algebra structure on $C^\infty(M)$. A bivector field $\pi$ is a Poisson structure on $M$ if and only if the graph $\mathrm{Gr}(\pi^\sharp) \subset TM \oplus T^*M$ of the induced bundle map $\pi^\sharp : T^*M \rightarrow TM,~ \alpha \mapsto \pi ( \alpha, -)$ is closed under the Courant bracket $[~,~]_{\mathrm{Cou}}$ on $TM \oplus T^*M$ \cite{courant}. Rota-Baxter operators can be seen as associative analogue of Poisson structures \cite{uchino}. In \cite{sev-wein} \v{S}evera and Weinstein introduced a notion of twisted Poisson structure in the study of Poisson geometry modified by a closed $3$-form. See \cite{klimcik-strobl} for the geometric aspects of twisted Poisson structures. Given a closed $3$-form $H \in \Omega^3_{\mathrm{cl}}(M)$ on a manifold $M$, they considered an $H$-twisted Courant bracket $[~,~]^H_{\mathrm{Cou}}$ on $TM \oplus T^*M$. A bivector field $\pi$ is called an $H$-twisted Poisson structure if $\mathrm{Gr}(\pi^\sharp)$ is closed under $[~,~]^H_{\mathrm{Cou}}$. In this regard, $H$-twisted Rota-Baxter operators are an associative analogue of twisted Poisson structures.
\end{remark}

\begin{exam}
Any Rota-Baxter operator of weight $0$ (more generally any relative Rota-Baxter operator) is an $H$-twisted Rota-Baxter operator with $H = 0 $.
\end{exam}

\begin{exam} \cite{uchino}
Let $A$ be an associative algebra, $M$ be an $A$-bimodule and $h : A \rightarrow M$ be an invertible Hochschild $1$-cochain. Then $T= h^{-1} : M \rightarrow A$ is an $H$-twisted Rota-Baxter operator with $H = - \delta_{\mathrm{Hoch}} h.$
\end{exam}

\begin{exam}\cite{uchino} Let $A$ be an associative algebra with multiplication map $\mu: A \otimes A \rightarrow A$. Note that the space $M = A \otimes A$ can be given an $A$-bimodule structure by $a \cdot (b \otimes c) = ab \otimes c$ and $(b \otimes c) \cdot a = b \otimes ca$, for $a \in A, ~ b \otimes c \in M=  A \otimes A$. Moreover, the map $H : A \otimes A \rightarrow M, ~ ( a \otimes b) \mapsto - a \otimes b$ is a Hochschild $2$-cocycle in the cohomology of $A$ with coefficients in the $A$-bimodule $M$. Then the multiplication map $\mu : M \rightarrow A$ is an $H$-twisted Rota-Baxter operator.
\end{exam}

\begin{exam}
Let $N : A \rightarrow A$ be a Nijenhuis operator on an associative algebra $A$ \cite{grab}, i.e., $N$ satisfies
\begin{align*}
N(a) N(b) = N (a N(b) + N(a) b - N (ab)),~ \text{ for } a, b \in A.
\end{align*}
Then $A$ carries a new associative algebra structure with product $a \cdot_N b = a N(b) + N(a) b - N (ab) ,$ for $a, b \in A$. We denote this algebra by $A_N$. Moreover, the vector space $A$ has an $A_N$-bimodule structure given by $a \cdot b = N(a) b $ and $b \cdot a = b N(a)$, for $a \in A_N,~ b \in A$. With this notation, the map $H : A_N \otimes A_N \rightarrow A,~ H (a, b) = - N(ab)$ is a Hochschild $2$-cocycle in the cohomology of $A_N$ with coefficients in the $A_N$-bimodule $A$. It is easy to see that the identity map $\mathrm{id} A \rightarrow A_N$ is an $H$-twisted Rota-Baxter operator. 
\end{exam}

\begin{prop}\cite{uchino}\label{prop-h-ass}
Let $T : M \rightarrow A$ be an $H$-twisted Rota-Baxter operator. Then $M$ carries an associative algebra structure with product given by
\begin{align*}
u * v = u \cdot T(v) + T(u) \cdot v + H (Tu, Tv), ~~ \text{ for } u, v \in M.
\end{align*}
\end{prop}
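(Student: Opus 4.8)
The plan is to verify associativity of the product $*$ directly, expanding $(u * v) * w$ and $u * (v * w)$ and showing the two expressions coincide for all $u, v, w \in M$. First I would write out $(u*v)*w$ by substituting the definition of $*$ twice: the outer $*$ gives $(u*v)\cdot T(w) + T(u*v)\cdot w + H(T(u*v), Tw)$, and I would then use the defining identity of an $H$-twisted Rota-Baxter operator to rewrite $T(u*v) = T\big(u\cdot T(v) + T(u)\cdot v + H(Tu,Tv)\big) = T(u)T(v)$. This is the key simplification: it replaces every occurrence of $T(u*v)$ by the product $T(u)T(v)$ in $A$, and similarly $T(v*w)$ by $T(v)T(w)$ on the other side. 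After this replacement, both $(u*v)*w$ and $u*(v*w)$ become sums of terms each of which is a bimodule expression built from the actions of $A$ on $M$, the cocycle $H$, and the (associative) multiplication of $A$.

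Next I would collect and compare terms. Expanding fully, $(u*v)*w$ produces nine terms and $u*(v*w)$ produces nine terms; I expect them to match in groups. The purely "$H$-free" terms — those involving only the left/right actions — will agree by the three bimodule axioms $(ab)\cdot u = a\cdot(b\cdot u)$, $(a\cdot u)\cdot b = a\cdot(u\cdot b)$, $(u\cdot a)\cdot b = u\cdot(ab)$, together with associativity of $\mu$ on $A$; these are exactly the identities that make the (untwisted) relative Rota-Baxter product associative. The terms linear in $H$ come in pairs such as $H(Tu, Tv)\cdot T(w)$ versus $H(Tu, T(v)T(w))$ and $T(u)\cdot H(Tv, Tw)$ versus $H(T(u)T(v), Tw)$, and their difference is controlled by the Hochschild $2$-cocycle condition
\[
Tu \cdot H(Tv, Tw) - H\big((Tu)(Tv), Tw\big) + H\big(Tu, (Tv)(Tw)\big) - H(Tu, Tv)\cdot Tw = 0
\]
applied with $a = Tu$, $b = Tv$, $c = Tw$. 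The terms involving $H$ of an element of the form $T(\text{something})$ where the something again contains $H$ do not arise, precisely because we eliminated $T(u*v)$ and $T(v*w)$ in favour of honest products in $A$ before expanding further.

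The main obstacle is purely bookkeeping: keeping track of the nine-plus-nine terms and recognizing which bimodule axiom or instance of the cocycle identity cancels which pair. I would organize the computation by grouping terms according to how many factors of $H$ they contain (zero or one), handle the $H$-free part using the known associativity of the relative Rota-Baxter product, and handle the $H$-linear part using the single cocycle identity above; no genuinely new idea beyond these two inputs is needed. An alternative, slicker route would be to invoke Proposition \ref{graph-tw}: since $\mathrm{Gr}(T)$ is a subalgebra of the associative algebra $A \ltimes_H M$, it inherits an associative product, and under the linear isomorphism $M \xrightarrow{\ \sim\ } \mathrm{Gr}(T)$, $u \mapsto (Tu, u)$, the induced product on $M$ is exactly $u*v$ because the second component of $(Tu,u)\cdot_H(Tv,v)$ is $Tu\cdot v + u\cdot Tv + H(Tu,Tv)$. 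I would present this transported-structure argument as the clean proof and leave the direct expansion as the conceptual content behind it.
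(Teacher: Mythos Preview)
The paper does not supply its own proof of this proposition; it is stated with a citation to Uchino and no argument follows. Your proposal is correct on both counts: the direct expansion (using the twisted Rota--Baxter identity to rewrite $T(u*v)=T(u)T(v)$ and $T(v*w)=T(v)T(w)$, then matching terms via the bimodule axioms and the Hochschild $2$-cocycle condition for $H$) and the transported-structure argument via Proposition~\ref{graph-tw} are each valid and complete. The graph argument is indeed the slicker one and is in the spirit of how the paper uses Proposition~\ref{graph-tw} elsewhere.

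One small bookkeeping correction: after you replace $T(u*v)$ by $T(u)T(v)$, each of $(u*v)*w$ and $u*(v*w)$ has five terms, not nine. Three pairs match by the three bimodule axioms (applied with $a=Tu$, $b=Tv$, $c=Tw$ as appropriate), and the remaining two terms on each side are exactly the four terms in the cocycle identity $Tu\cdot H(Tv,Tw) - H((Tu)(Tv),Tw) + H(Tu,(Tv)(Tw)) - H(Tu,Tv)\cdot Tw = 0$. So the cancellation pattern is even cleaner than you anticipated.
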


\medskip

In the following, we construct new twisted Rota-Baxter operators out an old one suitably modified by Hochschild cocycles. We start with the following result. 

\begin{prop}
Let $A$ be an associative algebra, $M$ be an $A$-bimodule and $H$ be a Hochschild $2$-cocycle. Then for any Hochschild $1$-cochain $h$, there is an isomorphism of twisted semi-direct products
\begin{align*}
A \ltimes_H M  ~\simeq~ A \ltimes_{H + \delta h} M.
\end{align*}
\end{prop}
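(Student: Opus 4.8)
The plan is to produce an explicit isomorphism rather than anything abstract. First I would observe that the target even makes sense: since $\delta_{\mathrm{Hoch}} \circ \delta_{\mathrm{Hoch}} = 0$, the $2$-cochain $H + \delta_{\mathrm{Hoch}} h$ is again a Hochschild $2$-cocycle ($\delta_{\mathrm{Hoch}}(H + \delta_{\mathrm{Hoch}} h) = \delta_{\mathrm{Hoch}} H = 0$), so by the Remark preceding Proposition \ref{graph-tw} the twisted semi-direct product $A \ltimes_{H + \delta h} M$ is a well-defined associative algebra.

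Next I would define the map $\phi : A \ltimes_H M \rightarrow A \ltimes_{H + \delta h} M$ by $\phi(a, u) = (a,\, u - h(a))$ for $(a,u) \in A \oplus M$. This is visibly a linear isomorphism with inverse $(a,u) \mapsto (a,\, u + h(a))$, so the only thing to check is that $\phi$ is multiplicative, i.e. $\phi\big( (a,u) \cdot_H (b,v) \big) = \phi(a,u) \cdot_{H + \delta h} \phi(b,v)$ for all $(a,u), (b,v)$.

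This last point is a direct computation. The left-hand side is $\big(ab,\, a\cdot v + u\cdot b + H(a,b) - h(ab)\big)$. Expanding the right-hand side using the multiplication formula for $\cdot_{H + \delta h}$ together with $(\delta_{\mathrm{Hoch}} h)(a,b) = a\cdot h(b) - h(ab) + h(a)\cdot b$, one finds that the terms $a\cdot h(b)$ and $h(a)\cdot b$ appear once with each sign and therefore cancel, leaving precisely $\big(ab,\, a\cdot v + u\cdot b + H(a,b) - h(ab)\big)$. Notice that this verification never uses the cocycle condition on $H$; that hypothesis enters only through the first paragraph, to guarantee both sides are honest algebras.

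There is essentially no obstacle here beyond guessing the correct map: the sign matters, and one must take $\phi(a,u) = (a, u - h(a))$ rather than $(a, u + h(a))$, since with the opposite sign the correction terms add instead of cancel and $\phi$ fails to be multiplicative. Once the right map is written down, the proof is a one-line check.
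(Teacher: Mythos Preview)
Your proposal is correct and matches the paper's own proof essentially line for line: the paper defines the same map $\Phi_h(a,u) = (a,\, u - h(a))$ and verifies multiplicativity by the identical computation. Your additional remarks (the well-definedness check, the explicit inverse, and the comment on the sign) are helpful elaborations not spelled out in the paper, but the argument itself is the same.
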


\begin{proof}
We define a map $\Phi_h : A \ltimes_H M  \mapsto A \ltimes_{H + \delta h} M$ by
$\Phi_h (a, u) = (a, u - h(a)),$ for $(a, u) \in A \oplus M$. Then we have
\begin{align*}
\Phi_h ((a,u) \cdot_H (b,v)) =~& \Phi_h (ab, a \cdot v +  u \cdot b + H(a,b)) \\
=~&(ab, a \cdot v+u \cdot b + H(a,b) - h (ab)) \\
=~& (ab, a \cdot v + u \cdot b + H(a,b) - a \cdot h(b) - h(a) \cdot b + (\delta h)(a,b) ) \\
=~& (a , u - h(a)) \cdot_{H + \delta h} (b, v - h (b)) \\
=~& \Phi_h (a,u) \cdot_{H + \delta h} \Phi_h (b,v).
\end{align*}
Hence the proof.
\end{proof}

Let $T: M \rightarrow A$ be an $H$-twisted Rota-Baxter operator. Consider the subalgebra $\mathrm{Gr} (T) \subset A \ltimes_H M$ of the twisted semi-direct product. For any Hochschild $1$-cochain $h$, the image $$\Phi_h ( \mathrm{Gr}(T)) = \{ ( Tu, u - h (T u)) |~ u \in M \} \subset A \ltimes_{H + \delta h} M$$ is a subalgebra. However, this subalgebra may not be the graph of a linear map from $M$ to $A$. If the linear map $(\mathrm{id} - h \circ T) : M \rightarrow M$ is invertible, then $\Phi_h ( \mathrm{Gr}(T))$ is the graph of the map $T ( \mathrm{id} - h \circ T)^{-1}$. In this case, $T ( \mathrm{id} - h \circ T)^{-1}$ is an $(H + \delta h)$-twisted Rota-Baxter operator.


\medskip

Next, we perturb an $H$-twisted Rota-Baxter operator by a suitable Hochschild $1$-cocycle motivated by the gauge transformations of Poisson structures \cite{sev-wein}. Let $T: M \rightarrow A$ be an $H$-twisted Rota-Baxter operator. Consider the graph $\mathrm{Gr}(T) \subset A \ltimes_H M$ which is a subalgebra of the twisted semi-direct product. For any Hochschild $1$-cocycle $B$, consider the deformed subspace
\begin{align*}
\tau_B (\mathrm{Gr}(T)) := \{ (Tu, u + B(Tu)) |~ u \in M \}.
\end{align*}
Then $\tau_B (\mathrm{Gr}(T)) \subset A \ltimes_H M$ is a subalgebra as
\begin{align*}
&( Tu, u + B(Tu)) \cdot_H ( Tv , v + B (Tv)) \\
&= ( T(u)T(v), (Tu)  \cdot v + (Tu) \cdot B(Tv) + u \cdot (Tv) + B (Tu) \cdot Tv + H (Tu, Tv)) \\
&= ( T(u )T(v), (Tu) \cdot v + u \cdot (Tv) + B (Tu, Tv) + H (Tu, Tv)) \in \tau_B (\mathrm{Gr}(T)).
\end{align*}
If the bundle map $\mathrm{id} + B \circ T : M \rightarrow M$ is invertible then $\tau_B (\mathrm{Gr}(T)) \subset A \ltimes_H M$ is the graph of the linear map $T (\mathrm{id} + B \circ T )^{-1}$. In this case, the $1$-cocycle $B$ is called $T$-admissible. Then the linear map $T (\mathrm{id} + B \circ T )^{-1}$ is an $H$-twisted Rota-Baxter operator by Proposition \ref{graph-tw}. We call this $H$-twisted Rota-Baxter operator as gauge transformation of $T$, denoted by $T_B$.

\begin{prop}
Let $T$ be an $H$-twisted Rota-Baxter operator and $B$ be a $T$-admissible $1$-cocycle. Then the associative algebra structures on $M$ induced from $H$-twisted Rota-Baxter operators $T$ and $T_B$ are isomorphic.
\end{prop}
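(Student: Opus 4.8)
The plan is to show that the linear map $\phi := \mathrm{id} + B \circ T : M \rightarrow M$, which is invertible precisely because $B$ is $T$-admissible, is an isomorphism of associative algebras from $(M, *)$ to $(M, *_B)$, where $*$ and $*_B$ denote the products of Proposition \ref{prop-h-ass} attached to $T$ and to the gauge transformation $T_B = T(\mathrm{id} + B\circ T)^{-1}$, respectively. Two elementary identities drive everything: since $T_B = T \circ \phi^{-1}$ we have $T_B \circ \phi = T$, so $T_B(\phi(u)) = T(u)$ for all $u \in M$; and since $T$ is $H$-twisted Rota-Baxter, $T(u*v) = T(u)\,T(v)$; and since $B$ is a Hochschild $1$-cocycle, $B(ab) = a \cdot B(b) + B(a) \cdot b$ for $a,b \in A$.

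The core step is then the direct verification of $\phi(u * v) = \phi(u) *_B \phi(v)$. For the left-hand side, $\phi(u*v) = u*v + B(T(u)T(v))$, and expanding $B(T(u)T(v))$ by the cocycle identity gives
\[
\phi(u*v) = u \cdot T(v) + T(u) \cdot v + H(Tu, Tv) + T(u) \cdot B(Tv) + B(Tu) \cdot T(v).
\]
For the right-hand side, using $T_B(\phi(u)) = Tu$ and $T_B(\phi(v)) = Tv$ in the definition of $*_B$,
\[
\phi(u) *_B \phi(v) = \big(u + B(Tu)\big) \cdot T(v) + T(u) \cdot \big(v + B(Tv)\big) + H(Tu, Tv),
\]
which expands to the same expression. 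Hence $\phi$ is an algebra homomorphism, and being bijective it is an isomorphism, proving the proposition.

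Conceptually — and this is the reason the computation works — one may argue entirely inside the twisted semidirect product: the map $\Theta_B : A \ltimes_H M \rightarrow A \ltimes_H M$, $\Theta_B(a,u) = (a, u + B(a))$, is an algebra automorphism (with inverse $\Theta_{-B}$) exactly by the cocycle condition on $B$, and it carries the subalgebra $\mathrm{Gr}(T)$ onto $\tau_B(\mathrm{Gr}(T)) = \mathrm{Gr}(T_B)$. Since $(M,*)$ and $(M,*_B)$ are identified with these two subalgebras via $u \mapsto (Tu,u)$ and $x \mapsto (T_B x, x)$, the restriction of $\Theta_B$ induces the isomorphism $u \mapsto u + B(Tu)$, recovering $\phi$. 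I do not expect any real obstacle here; the only point requiring a little care is keeping track of whether $T$ or $T_B$ appears in each product, which is handled uniformly by the identity $T_B \circ \phi = T$.
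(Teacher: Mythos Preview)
Your proof is correct and follows essentially the same approach as the paper: both use the invertible linear map $\phi = \mathrm{id} + B \circ T$ and verify $\phi(u*v) = \phi(u) *_B \phi(v)$ via the identities $T_B \circ \phi = T$, the cocycle condition on $B$, and $T(u*v) = T(u)T(v)$. Your additional conceptual remark that $\Theta_B(a,u) = (a, u + B(a))$ is an automorphism of $A \ltimes_H M$ carrying $\mathrm{Gr}(T)$ to $\mathrm{Gr}(T_B)$ is a nice gloss not present in the paper, but the core argument is identical.
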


\begin{proof}
Consider the vector space isomorphism $\mathrm{id} + B \circ T : M \rightarrow M$. We have
\begin{align*}
&(\mathrm{id} + B \circ T) (u) *_B (\mathrm{id} + B \circ T) (v) \\
&= (Tu) (\mathrm{id} + B \circ T)(v) + (\mathrm{id} + B \circ T) (u) (Tv) + H (Tu, Tv) \\
&= T(u) \cdot v + u \cdot T(v) + (Tu) \cdot B (Tv) + B (Tu) \cdot (Tv) + H (Tu, Tv) \\
&= u * v + B ( (Tu) (Tv)) \\
&= u * v + BT (u * v) = (\mathrm{id} + B \circ T)(u * v).
\end{align*}
Hence the proof.
\end{proof}

Next, we recall Reynolds operators on associative algebras which are special cases of twisted Rota-Baxter operators \cite{reynolds} (see also \cite{zhang-gao-guo}).

\begin{defn}
Let $A$ be an associative algebra. A linear map $R: A \rightarrow A$ is called a {\bf Reynolds operator} if $R$ satisfies
\begin{align*}
R(a) R(b) = R ( aR(b) + R(a) b - R(a) R(b)),~ \text{ for } a, b \in A.
\end{align*}
\end{defn}

Note that, if $\mu : A^{\otimes 2} \rightarrow A$ denotes the associative multiplication on $A$ then $\mu$ is a Hochschild $2$-cocycle in the cohomology of $A$ with coefficients in itself. Thus, a Reynolds operator on $A$ is a $(-\mu)$-twisted Rota-Baxter operator.

Twisted Rota-Baxter operators (hence Reynolds operators) are related to NS-algebras in the same way Rota-Baxter operators are related to dendriform algebras \cite{aguiar,uchino}. 

\begin{defn} \cite{leroux}
An {\bf NS-algebra} is a vector space $A$ together with three binary operations $\prec, \succ, \curlyvee : A \otimes A \rightarrow A$ satisfying the following set of identities
\begin{align}
(a \prec b) \prec c =~& a \prec (b * c), \label{ns-1}\\
(a \succ b ) \prec c =~& a \succ (b \prec c), \label{ns-2}\\
(a * b ) \succ c =~& a \succ ( b \succ c), \label{ns-3}\\
(a \curlyvee b ) \prec c + (a * b ) \curlyvee c =~& a \succ (b \curlyvee c) + a \curlyvee (b * c), \label{ns-4}
\end{align}
for all $a, b, c \in A$, where $a *b = a \prec b + a \succ b + a \curlyvee b$.
\end{defn}

It follows from (\ref{ns-1}) - (\ref{ns-4}) that the operation $*$ is an associative multiplication on $A$. Therefore, NS-algebras are certain splitting of associative algebras by three operations. Note that, in an NS-algebra $(A, \prec, \succ, \curlyvee)$, if the binary operation $\curlyvee$ is trivial, then one reduces to a dendriform algebra \cite{loday}.

Let $(A, \prec, \succ, \curlyvee)$ and $(A', \prec', \succ', \curlyvee')$ be two NS-algebras. A morphism between them is a linear map $f : A \rightarrow A'$ satisfying $f (a \prec b) = f(a) \prec' f(b)$, $f ( a \succ b) = f(a) \succ' f( b)$ and $f (a \curlyvee b) = f(a) \curlyvee' f(b)$, for $a, b \in A$. NS-algebras and morphisms between them forms a category, denoted by {\bf NS}.

NS-algebras first appeared from Nijenhuis operators. More precisely, if $N : A \rightarrow A$ is a Nijenhuis operator on an associative algebra $A$, then the binary operations
\begin{align*}
a \prec b = a N(b), \quad a \succ b = N(a) b ~~~ \text{ and } ~~~ a \curlyvee b = - N (ab)
\end{align*}
makes $A$ into an NS-algebra. We remark that a Nijenhuis operator induces a more general notion of $N$-dendriform algebra introduced in \cite{lei-guo}. 

In the following, we recall from \cite[Proposition 3.3]{uchino} that  a 
twisted Rota-Baxter operator also gives rise to an NS-algebra.

\begin{prop}\label{trb-ns}
Let $T: M \rightarrow A$ be an $H$-twisted Rota-Baxter operator. Then $M$ carries an NS-algebra structure with binary operations 
\begin{align*}
u \prec v = u \cdot T(v), \quad u \succ v = T(u ) \cdot v ~~~ \text{ and } ~~~ u \curlyvee v = H (Tu, Tv), ~ \text{ for } u, v \in M. 
\end{align*}
\end{prop}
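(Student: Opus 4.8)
The plan is to verify the four NS-algebra axioms (\ref{ns-1})--(\ref{ns-4}) directly by substituting the definitions $u \prec v = u \cdot T(v)$, $u \succ v = T(u) \cdot v$, and $u \curlyvee v = H(Tu, Tv)$, and then reducing everything to the bimodule axioms for $M$, the associativity of $\mu$ on $A$, the cocycle condition on $H$, and — crucially — the defining identity of an $H$-twisted Rota-Baxter operator. The key preliminary observation I would record is that the induced associative product $u * v = u \prec v + u \succ v + u \curlyvee v$ from Proposition \ref{trb-ns} coincides with the product $u * v = u \cdot T(v) + T(u) \cdot v + H(Tu, Tv)$ of Proposition \ref{prop-h-ass}, and therefore $T$ is a morphism of associative algebras from $(M, *)$ to $(A, \mu)$; that is, $T(u * v) = T(u) T(v)$ for all $u, v \in M$. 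This fact is exactly the twisted Rota-Baxter identity rewritten, and it is the engine that drives the whole computation: wherever a term $T(u) T(v)$ or $T(u * v)$ appears, I can trade one for the other.

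First I would check (\ref{ns-1}): the left side is $(u \cdot T(v)) \cdot T(w)$ and the right side is $u \cdot T(v * w) = u \cdot (T(v) T(w))$, so the identity is precisely the right-module axiom $(u \cdot a) \cdot b = u \cdot (ab)$ with $a = T(v)$, $b = T(w)$. Next, (\ref{ns-2}): the left side is $(T(u) \cdot v) \cdot T(w)$ and the right side is $T(u) \cdot (v \cdot T(w))$, which is the middle bimodule axiom $(a \cdot u) \cdot b = a \cdot (u \cdot b)$. Then (\ref{ns-3}): the left side is $T(u * v) \cdot w = (T(u) T(v)) \cdot w$ and the right side is $T(u) \cdot (T(v) \cdot w)$, which is the left-module axiom $(ab) \cdot u = a \cdot (b \cdot u)$. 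So the first three axioms each collapse to a single bimodule axiom once $T(u * v) = T(u) T(v)$ is in hand.

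The main obstacle — and the only computation requiring genuine care — is the fourth axiom (\ref{ns-4}). Expanding the left side gives $(u \curlyvee v) \prec w + (u * v) \curlyvee w = H(Tu, Tv) \cdot T(w) + H(T(u * v), Tw) = H(Tu, Tv) \cdot T(w) + H(T(u)T(v), Tw)$, while the right side gives $u \succ (v \curlyvee w) + u \curlyvee (v * w) = T(u) \cdot H(Tv, Tw) + H(Tu, T(v * w)) = T(u) \cdot H(Tv, Tw) + H(Tu, T(v)T(w))$. After again using $T(u * v) = T(u)T(v)$ and $T(v * w) = T(v)T(w)$, the required identity becomes
\[
H(Tu, Tv) \cdot T(w) + H\big(T(u)T(v), T(w)\big) = T(u) \cdot H(Tv, Tw) + H\big(T(u), T(v)T(w)\big),
\]
which is exactly the Hochschild $2$-cocycle condition for $H$ evaluated on the triple $a = Tu$, $b = Tv$, $c = Tw$. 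Hence (\ref{ns-4}) holds as well, and this completes the proof; the only subtlety is keeping track of which bimodule axiom or which instance of the cocycle/Rota-Baxter identity is being invoked at each step, but no axiom beyond those already established in the excerpt is needed.
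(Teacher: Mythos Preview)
Your proof is correct. The paper itself does not supply a proof of this proposition---it is recalled from \cite[Proposition 3.3]{uchino}---but your direct verification of the four NS-algebra axioms, reducing (\ref{ns-1})--(\ref{ns-3}) to the three bimodule axioms via the identity $T(u*v)=T(u)T(v)$ and reducing (\ref{ns-4}) to the Hochschild $2$-cocycle condition evaluated at $(Tu,Tv,Tw)$, is exactly the standard argument.
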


Note that the corresponding associative algebra structure on $M$ coincide with the one given in Proposition \ref{prop-h-ass}. More details study of NS-algebras from cohomological perspectives are made in section \ref{sec-ns-alg}.

\medskip

Let $A$ be an associative algebra, $M$ be an $A$-bimodule, $H \in C^2_{\mathrm{Hoch}}(A, M)$ be a Hochschild $2$-cocycle and $T: M \rightarrow A$ be an $H$-twisted Rota-Baxter operator. Suppose $A'$ is another associative algebra, $M'$ is an $A'$-bimodule, $H' \in C^2_{\mathrm{Hoch}}(A', M')$ is a Hochschild $2$-cocycle and $T': M' \rightarrow A'$ is an $H'$-twisted Rota-Baxter operator. 

\begin{defn}
A morphism of twisted Rota-Baxter operators from $T$ to $T'$ is a pair $(\phi, \psi)$ consisting of an algebra morphism $\phi : A \rightarrow A'$ and a linear map $\psi : M \rightarrow M'$ satisfying
\begin{align}
&\psi (a \cdot u) = \phi(a) \cdot  \psi(u) ~~~ \text{ and } ~~~ \psi (u \cdot a) = \psi(u) \cdot \phi(a), \label{action-comp}\\
&\psi \circ H = H' \circ ( \phi \otimes \phi), \label{h-comp}\\
&\phi \circ T = T' \circ \psi, ~ \text{ for } a \in A, u \in M. \label{t-comp}
\end{align}
\end{defn}

Twisted Rota-Baxter operators and morphisms between them forms a category, denoted by {\bf TRB}. Note that the construction of Proposition \ref{trb-ns} yields a functor ${\bf TRB} \rightarrow {\bf NS}$.

\section{Cohomology of twisted Rota-Baxter operators}\label{sec-cohomo-trb}
In \cite{das-rota} the author constructs a graded Lie algebra using Voronov's derived bracket \cite{voro} whose Maurer-Cartan elements are precisely Rota-Baxter operators. In this section, we include a ternary map on the underlying graded vector space of the graded Lie algebra to make it an $L_\infty$-algebra. This ternary bracket is made by using a Hochschild $2$-cocycle $H$. The Maurer-Cartan elements of this new $L_\infty$-algebra are precisely $H$-twisted Rota-Baxter operators. This characterization allows us to define cohomology for an $H$-twisted Rota-Baxter operator $T$. (See the Appendix for some basics on $L_\infty$-algebras and Maurer-Cartan elements). Finally, we show that the cohomology of $T$ can be seen as the Hochschild cohomology of $(M, *)$ with coefficients in a suitable bimodule structure on $A$.

\subsection{Maurer-Cartan characterization and cohomology} Let $A$ be an associative algebra and $M$ be an $A$-bimodule. Then it has been shown in \cite{das-rota} that the graded vector space $\bigoplus_{n \geq 0} \mathrm{Hom}( M^{\otimes n }, A)$ equipped with the bracket
\begin{align}\label{derived-brkt}
\llbracket P, Q \rrbracket = (-1)^p [[ \mu+ l + r , P]_{\mathsf{G}}, Q]_{\mathsf{G}},
\end{align}
for $P \in \mathrm{Hom}(M^{\otimes p}, A),~ Q \in \mathrm{Hom}(M^{\otimes q}, A)$, forms a graded Lie algebra. Here $[~,~]_{\mathsf{G}}$ is the classical Gerstenhaber bracket of multilinear maps on the direct sum vector space $A \oplus M$. The explicit description of the bracket $\llbracket ~,~ \rrbracket$ can be found in the above mentioned reference. It has been observed that for a linear map $T \in \mathrm{Hom}(M, A)$,
\begin{align}\label{tt}
\llbracket T, T \rrbracket (u, v) = 2 \big(  T ( T(u) \cdot v + u \cdot T(v)) - T(u) T(v) \big),~ \text{ for } u, v \in M.
\end{align}
The graded Lie bracket $\llbracket ~, ~ \rrbracket$ is the associative analog of the classical Schouten-Nijenhuis bracket of multivector fields on a manifold $M$. Given a closed $3$-form on $M$, in \cite{fre-zam}, the authors constructs a ternary bracket on the space of multivector fields which together with the Schouten-Nijenhuis bracket makes the graded space of multivector fields an $L_\infty$-algebra. The Maurer-Cartan elements of this $L_\infty$-algebra are twisted Poisson structures. The associative analog can be rephrased as follows.

Let $H \in C^2_{\mathrm{Hoch}}(A, M)$ be a Hochschild $2$-cocycle of $A$ with coefficients in $M$. Define a degree $-1$ ternary bracket $\llbracket ~, ~ \rrbracket$ on the graded vector space $\bigoplus_{n \geq 0} \mathrm{Hom}( M^{\otimes n }, A)$ by
\begin{align}\label{ter-brk}
&\llbracket P, Q, R \rrbracket (u_1, \ldots, u_{p+q+r-1}) \\
&= (-1)^{pqr} \bigg\{ \sum_{1 \leq i \leq p} (-1)^{(i-1) q}~ P \big( u_1, \ldots, u_{i-1}, H \big( Q (u_i, \ldots, u_{i+q-1}), R(u_{i+q}, \ldots, u_{i+q+r-1}) \big), \ldots, u_{p+q+r-1} \big)  \nonumber \\
&- (-1)^{qr} \sum_{1 \leq i \leq p} (-1)^{(i-1) r}~ P \big( u_1, \ldots, u_{i-1}, H \big( R (u_i, \ldots, u_{i+r-1}), Q(u_{i+r}, \ldots, u_{i+q+r-1}) \big), \ldots, u_{p+q+r-1} \big)  \nonumber \\
&- (-1)^{pq} \sum_{1 \leq i \leq q} (-1)^{(i-1) p}~ Q \big( u_1, \ldots, u_{i-1}, H \big( P (u_i, \ldots, u_{i+p-1}), R(u_{i+p}, \ldots, u_{i+p+r-1}) \big), \ldots, u_{p+q+r-1} \big)  \nonumber  \\
&+ (-1)^{p(q+r)} \sum_{1 \leq i \leq q} (-1)^{(i-1) r}~ Q \big( u_1, \ldots, u_{i-1}, H \big( R (u_i, \ldots, u_{i+r-1}), P(u_{i+r}, \ldots, u_{i+p+r-1}) \big), \ldots, u_{p+q+r-1} \big)  \nonumber \\
&- (-1)^{pq+qr+rp} \sum_{1 \leq i \leq r} (-1)^{(i-1) q}~ R \big( u_1, \ldots, u_{i-1}, H \big( Q (u_i, \ldots, u_{i+q-1}), P(u_{i+q}, \ldots, u_{i+p+q-1}) \big), \ldots, u_{p+q+r-1} \big)  \nonumber \\
&+ (-1)^{(p+q)r}  \sum_{1 \leq i \leq r} (-1)^{(i-1) p}~ R \big( u_1, \ldots, u_{i-1}, H \big( P (u_i, \ldots, u_{i+p-1}), Q(u_{i+p}, \ldots, u_{i+p+q-1}) \big), \ldots, u_{p+q+r-1} \big) \bigg\}.  \nonumber 
\end{align}
It is easy to see that the ternary bracket $\llbracket ~, ~, ~ \rrbracket$ is graded skew-symmetric. Similar to the proof of \cite{fre-zam}, one can also show that the binary bracket $\llbracket ~, ~\rrbracket$ and the ternary bracket $\llbracket ~, ~ , ~ \rrbracket$ are compatible in the sense of an $L_\infty$-algebra (with higher maps are trivial). Moreover, we have the following.

\begin{thm}\label{mc-main}
Let $A$ be an associative algebra, $M$ be an $A$-bimodule and $H$ be a Hochschild $2$-cocycle. A linear map $T: M \rightarrow A$ is an $H$-twisted Rota-Baxter operator if and only if $T \in \mathrm{Hom}(M, A)$ is a Maurer-Cartan element in the $L_\infty$-algebra $ (  \bigoplus_{n \geq 0} \mathrm{Hom}(M^{\otimes n}, A), \llbracket ~, ~ \rrbracket, \llbracket ~, ~, ~ \rrbracket)$.
\end{thm}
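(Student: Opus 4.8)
The plan is to recall that a Maurer–Cartan element in an $L_\infty$-algebra with only a binary bracket $\llbracket\,,\,\rrbracket$ and a ternary bracket $\llbracket\,,\,,\,\rrbracket$ is an element $T$ of degree $1$ (here degree means $T\in\mathrm{Hom}(M,A)$, the degree-$1$ piece of $\bigoplus_n\mathrm{Hom}(M^{\otimes n},A)$) satisfying
\[
\frac{1}{2}\llbracket T,T\rrbracket+\frac{1}{6}\llbracket T,T,T\rrbracket=0.
\]
Since there is no differential (unary bracket) in this $L_\infty$-algebra, these are the only terms. So the whole statement reduces to identifying the left-hand side as a map $M^{\otimes 2}\to A$ and checking that its vanishing is exactly the $H$-twisted Rota–Baxter identity.

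First I would use \eqref{tt}, which is quoted from \cite{das-rota}: $\llbracket T,T\rrbracket(u,v)=2\bigl(T(T(u)\cdot v+u\cdot T(v))-T(u)T(v)\bigr)$, so that $\tfrac12\llbracket T,T\rrbracket(u,v)=T(T(u)\cdot v+u\cdot T(v))-T(u)T(v)$. Next I would specialize the ternary bracket formula \eqref{ter-brk} to $P=Q=R=T$, i.e. $p=q=r=1$, so that $p+q+r-1=2$ and the output is again a bilinear map in $(u_1,u_2)=(u,v)$. With all three slots equal, every sign $(-1)^{pqr}$, $(-1)^{qr}$, $(-1)^{pq}$, etc., becomes $(-1)^{1}=-1$ or $(-1)^{(i-1)\cdot 1}$ with $i=1$ giving $1$; the six sums each have a single term ($i=1$), and each term is of the form $T(H(T(u_{\,\cdot}),T(u_{\,\cdot})))$ with the two arguments being $(u,v)$ in some order. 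Working through the six lines, the three "$P$-then-$R$"/"$Q$-then-$R$"/"$R$-then-$Q$"-type terms collapse: one checks each line contributes $\pm T(H(Tu,Tv))$ (using that $H$ need not be symmetric, so one keeps track of argument order carefully), and the total is $\llbracket T,T,T\rrbracket(u,v)=6\,T(H(Tu,Tv))$, hence $\tfrac16\llbracket T,T,T\rrbracket(u,v)=T(H(Tu,Tv))$.

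Adding the two contributions, the Maurer–Cartan equation $\tfrac12\llbracket T,T\rrbracket+\tfrac16\llbracket T,T,T\rrbracket=0$ becomes, evaluated on $(u,v)$,
\[
T(u\cdot T(v)+T(u)\cdot v)-T(u)T(v)+T(H(Tu,Tv))=0,
\]
which is precisely the defining identity of an $H$-twisted Rota–Baxter operator. Conversely, reading the same computation backwards shows that the twisted Rota–Baxter identity forces the Maurer–Cartan equation, giving the equivalence.

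The one genuinely delicate point — and the place I expect to spend the most care — is the bookkeeping of signs and of the order of arguments of $H$ in the six-term expansion of \eqref{ter-brk} at $p=q=r=1$, since $H$ is not assumed symmetric and the six summands pair up with alternating signs; one must verify that the "reversed-order" terms (lines 2, 4, 6 of \eqref{ter-brk}) combine correctly with the "direct-order" terms (lines 1, 3, 5) so that no $H(Tv,Tu)$ survives and the coefficient is exactly $6$. I would also want to remark (or cite the parallel with \cite{fre-zam}) that $\llbracket\,,\,\rrbracket$ and $\llbracket\,,\,,\,\rrbracket$ do assemble into a bona fide $L_\infty$-algebra, as asserted in the text preceding the theorem, so that "Maurer–Cartan element" is meaningful; granting that, the argument above is the whole proof.
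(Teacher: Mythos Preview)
Your approach is exactly the paper's: compute $\tfrac12\llbracket T,T\rrbracket$ from \eqref{tt}, compute $\llbracket T,T,T\rrbracket$ from \eqref{ter-brk} at $p=q=r=1$, and observe that the Maurer--Cartan equation reduces to the $H$-twisted Rota--Baxter identity. The paper's proof is no longer than yours.

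There are, however, two compensating sign slips in your writeup that you should fix, precisely at the ``delicate point'' you flagged. First, in the paper's sign conventions (see the Appendix, following \cite{markl}) the Maurer--Cartan equation reads
\[
\frac{1}{2}\,l_2(\alpha,\alpha)\;-\;\frac{1}{6}\,l_3(\alpha,\alpha,\alpha)=0,
\]
with a \emph{minus} in front of the cubic term, not a plus. Second, when you specialize \eqref{ter-brk} to $P=Q=R=T$, you correctly find that each of the six summands inside the braces contributes $+T(H(Tu,Tv))$, but you have dropped the overall prefactor $(-1)^{pqr}=(-1)^{1}=-1$ sitting in front of the brace; hence
\[
\llbracket T,T,T\rrbracket(u,v)=-6\,T(H(Tu,Tv)),
\]
as stated in the paper's \eqref{ttt}, not $+6\,T(H(Tu,Tv))$. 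The two sign errors cancel, so your final displayed identity is correct, but each intermediate claim is off by a sign.
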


\begin{proof}
For a linear map $T: M \rightarrow A$, we have from (\ref{ter-brk}) that
\begin{align}\label{ttt}
\llbracket T, T, T \rrbracket (u, v) = - 6 T ( H (Tu, Tv)),~ \text{ for } u, v \in M.
\end{align}
Thus, from (\ref{tt}) and (\ref{ttt}), we get
\begin{align*}
\big( \frac{1}{2} \llbracket T, T \rrbracket - \frac{1}{6} \llbracket T, T, T \rrbracket \big) (u, v) =   T (T(u) \cdot v +  u \cdot T(v)) - T(u)T(v) + T (H (Tu, Tv)).
\end{align*}
Therefore, $T$ is a Maurer-Cartan element if and only if $T$ is an $H$-twisted Rota-Baxter operator.
\end{proof}

Let $T$ be an $H$-twisted Rota-Baxter operator. It follows from the above proposition that $T$ induces a differential $d_T : \mathrm{Hom}(M^{\otimes n}, A) \rightarrow \mathrm{Hom}(M^{\otimes n}, A)$, for $n \geq 0$, by
\begin{align*}
d_T (f) = \llbracket T, f \rrbracket - \frac{1}{2} \llbracket T, T, f \rrbracket, ~ \text{ for } f \in \mathrm{Hom}(M^{\otimes n}, A).
\end{align*}
It follows from the definition of the brackets $\llbracket ~, ~ \rrbracket$ and $\llbracket ~,~, ~ \rrbracket$ that a linear map $f \in \mathrm{Hom}(M, A)$ satisfies $d_T (f) = 0$ if
\begin{align}\label{dt-1-co}
T ( f(u) \cdot v ~+~ &u \cdot f(v)) + f ( T(u) \cdot v + u \cdot T(v)) - T(u) f(v) - f(u) T(v) \\
& + T \big( H ( T(u), f(v)) + H ( f(u), T(v)) \big) + f ( H (T(u), T(v)) ) = 0,~ \text{ for } u, v \in M. \nonumber
\end{align}

For each $n \geq 0$, we define
\begin{align*}
\mathcal{Z}^n_T (M, A) = \{ f \in \mathrm{Hom}(M^{\otimes n}, A) |~ d_T (f) = 0 \} ~~~~ \text{ and } ~~~~ 
\mathcal{B}^n_T (M, A) = \{ d_T g |~ g \in \mathrm{Hom}(M^{\otimes n-1}, A) \}
\end{align*}
be the space of $n$-cocycles and $n$-coboundaries, respectively. The corresponding cohomology groups 
$$\mathcal{H}^n_T (M, A) = \frac{\mathcal{Z}^n_T (M, A)}{\mathcal{B}^n_T (M, A)}, ~ \text{ for } n \geq 0$$
are called the cohomology of the $H$-twisted Rota-Baxter operator $T$.

Given an $L_\infty$-algebra and a Maurer-Cartan element of it, one can construct a new $L_\infty$-algebra twisted by the Maurer-Cartan element \cite{getzler,markl}. In the present context, this simply gives the following.

\begin{thm}
Let $T : M \rightarrow A$ be an $H$-twisted Rota-Baxter operator. Then the graded vector space $\bigoplus_{n \geq 0} \mathrm{Hom}(M^{\otimes n}, A)$ carries a new $L_\infty$-algebra (called the twisted $L_\infty$-algebra) with structure maps
\begin{align*}
l_1 (P ) = d_T (P), ~~ \quad l_2 (P, Q) = \llbracket P, Q \rrbracket - \llbracket T, P, Q \rrbracket, ~~ \quad l_3 (P, Q, R) = \llbracket P, Q, R \rrbracket
\end{align*}
and trivial higher maps. Moreover, for any linear map $T' : M \rightarrow A$, the sum $T+ T'$ is an $H$-twisted Rota-Baxter operator if and only if $T'$ is a Maurer-Cartan element in the twisted $L_\infty$-algebra.
\end{thm}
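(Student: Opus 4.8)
The plan is to invoke the standard twisting construction for $L_\infty$-algebras, applied to the $L_\infty$-algebra $(\bigoplus_{n\ge 0}\mathrm{Hom}(M^{\otimes n},A),\llbracket~,~\rrbracket,\llbracket~,~,~\rrbracket)$ of Theorem \ref{mc-main} and its Maurer-Cartan element $T$. Recall (see the Appendix) that if $(L,l_1=0,l_2,l_3,\ldots)$ is an $L_\infty$-algebra and $\alpha$ is a Maurer-Cartan element, then the twisted structure maps
\begin{align*}
l_k^\alpha(x_1,\ldots,x_k) = \sum_{n\ge 0}\frac{1}{n!}\, l_{k+n}(\underbrace{\alpha,\ldots,\alpha}_{n},x_1,\ldots,x_k)
\end{align*}
again define an $L_\infty$-algebra, and $\alpha+\alpha'$ is Maurer-Cartan in $L$ if and only if $\alpha'$ is Maurer-Cartan in the $\alpha$-twisted $L_\infty$-algebra. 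In our situation only $l_2=\llbracket~,~\rrbracket$ and $l_3=\llbracket~,~,~\rrbracket$ are nonzero, so the infinite sums truncate: $l_1^T$ receives contributions from $l_2(T,-)$ and $\frac{1}{2}l_3(T,T,-)$; $l_2^T$ from $l_2(-,-)$ and $l_3(T,-,-)$; $l_3^T$ from $l_3(-,-,-)$ only; and $l_k^T=0$ for $k\ge 4$.

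First I would record the explicit truncated formulas: $l_1(P)=l_2(T,P)+\tfrac12 l_3(T,T,P)=\llbracket T,P\rrbracket-\tfrac12\llbracket T,T,P\rrbracket=d_T(P)$, matching the definition of $d_T$ already given after Theorem \ref{mc-main} (here one should double-check the sign on the $l_3$ term against the stated formula for $d_T$, using graded skew-symmetry of $\llbracket~,~,~\rrbracket$ to move $T$ into the first two slots). Next, $l_2(P,Q)=\llbracket P,Q\rrbracket+l_3(T,P,Q)=\llbracket P,Q\rrbracket-\llbracket T,P,Q\rrbracket$ (again checking the sign from skew-symmetry), and $l_3(P,Q,R)=\llbracket P,Q,R\rrbracket$. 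These are exactly the maps $l_1,l_2,l_3$ in the statement, so the first assertion follows once we cite the general twisting theorem. For the second assertion, I would expand the Maurer-Cartan equation $\sum_{k\ge 1}\frac{(-1)^{k(k-1)/2}}{k!}l_k^T(T',\ldots,T')=0$ in the twisted algebra: it becomes $d_T(T')-\tfrac12(l_2^T(T',T'))-\tfrac16 l_3^T(T',T',T')=0$ (with the appropriate Koszul signs), and substituting the truncated formulas, this is precisely the condition that $\tfrac12\llbracket T+T',T+T'\rrbracket-\tfrac16\llbracket T+T',T+T',T+T'\rrbracket=0$, i.e. by Theorem \ref{mc-main} that $T+T'$ is an $H$-twisted Rota-Baxter operator.

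The main obstacle is bookkeeping rather than conceptual: one must verify that the Koszul/graded signs in the general twisting formula, combined with the degree conventions here (elements of $\mathrm{Hom}(M^{\otimes n},A)$ have degree $n-1$, and $T$ has degree $0$), reproduce exactly the coefficients $+1,-1,+1$ appearing in $l_1,l_2,l_3$ and the $\tfrac12,\tfrac16$ in the Maurer-Cartan equation, with no residual sign discrepancies. I expect the cleanest route is to avoid re-deriving the twisting theorem and instead argue the second statement directly from Theorem \ref{mc-main}: since $T$ is already Maurer-Cartan, the Maurer-Cartan function $\mathrm{MC}(T+T')$ in the original $L_\infty$-algebra, when re-expanded around $T$, equals $\mathrm{MC}^T(T')$ for the twisted structure by a purely formal polarization identity, so $\mathrm{MC}(T+T')=0\iff T+T'$ is an $H$-twisted Rota-Baxter operator $\iff \mathrm{MC}^T(T')=0$. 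The compatibility of $l_1^T,l_2^T,l_3^T$ as an $L_\infty$-algebra then follows formally from the $L_\infty$-relations for the original brackets, which were already asserted (in the style of \cite{fre-zam}) in the discussion preceding Theorem \ref{mc-main}.
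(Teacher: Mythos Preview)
Your proposal is correct and follows essentially the same approach as the paper: the first assertion is obtained by citing the general twisting construction for $L_\infty$-algebras (exactly as in the Appendix and \cite{getzler,markl}), and the second assertion is proved by the direct expansion you describe as the ``cleanest route''---the paper simply computes $\tfrac{1}{2}\llbracket T+T',T+T'\rrbracket-\tfrac{1}{6}\llbracket T+T',T+T',T+T'\rrbracket$, uses that $T$ is Maurer-Cartan to cancel the pure-$T$ terms, and regroups the remainder as $l_1(T')+\tfrac{1}{2}l_2(T',T')-\tfrac{1}{6}l_3(T',T',T')$. Your caution about the signs is well placed: with the conventions in the Appendix the twisted maps already carry the minus signs $l_1'(x)=l_2(\alpha,x)-\tfrac{1}{2}l_3(\alpha,\alpha,x)$ and $l_2'(x,y)=l_2(x,y)-l_3(\alpha,x,y)$, so no further sign manipulation via graded skew-symmetry is needed.
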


\begin{proof}
The first part from the standard construction of twisted $L_\infty$-algebra \cite{getzler,markl}. For the second part, we observe that
\begin{align*}
&\frac{1}{2} \llbracket T + T' , T + T' \rrbracket - \frac{1}{6} \llbracket T + T', T + T', T + T' \rrbracket \\
&= \frac{1}{2} \llbracket T, T' \rrbracket + \frac{1}{2} \llbracket T', T \rrbracket + \frac{1}{2} \llbracket T', T' \rrbracket  - \frac{1}{6} \llbracket T, T, T' \rrbracket -  \frac{1}{6} \llbracket T, T', T \rrbracket -  \frac{1}{6} \llbracket T', T, T \rrbracket \\
&- \frac{1}{6} \llbracket T, T', T' \rrbracket -  \frac{1}{6} \llbracket T', T, T' \rrbracket -  \frac{1}{6} \llbracket T', T', T \rrbracket - \frac{1}{6} \llbracket T', T', T' \rrbracket \\
&= (\llbracket T, T' \rrbracket - \frac{1}{2} \llbracket T, T, T' \rrbracket)   + \frac{1}{2} ( \llbracket T', T' \rrbracket - \llbracket T, T', T' \rrbracket )  - \frac{1}{6} \llbracket T', T', T' \rrbracket \\
&= l_1 (T') + \frac{1}{2}~ l_2( T', T')  - \frac{1}{6} ~l_3 (T', T', T').
\end{align*}
This shows that $T+ T'$ is an $H$-twisted Rota-Baxter operator (i.e., the left hand side vanishes) if and only if $T'$ is a Maurer-Cartan element in the twisted $L_\infty$-algebra (i.e., the right hand side vanishes). Hence the proof.
\end{proof}




\subsection{Cohomology of twisted Rota-Baxter operator as Hochschild cohomology}

Let $T : M \rightarrow A$ be an $H$-twisted Rota-Baxter operator. Then it follows from Proposition \ref{prop-h-ass} that $M$ carries an associative algebra structure with product
\begin{align*}
u * v = u \cdot T(v) + T(u) \cdot v + H (Tu, Tv),~ \text{ for } u, v \in M.
\end{align*}
Moreover, we have the following.
\begin{prop}
The maps $l_T : M \otimes A \rightarrow A$ and $r_T : A \otimes M \rightarrow A$ given by
\begin{align*}
l_T (u, a) = T(u) a - T (u \cdot a + H (Tu, a)),  \qquad
r_T(a, u) = a T(u) - T(a \cdot u + H(a, Tu)), ~~~  u \in M, a \in A
\end{align*}
defines an $(M , *)$-bimodule structure on $A$.
\end{prop}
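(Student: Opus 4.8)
The proposition asks us to verify, for all $u, v \in M$ and $a \in A$, the three defining identities of an $(M,*)$-bimodule:
\[
l_T(u * v, a) = l_T(u, l_T(v, a)), \qquad l_T(u, r_T(a, v)) = r_T(l_T(u, a), v), \qquad r_T(r_T(a, u), v) = r_T(a, u * v).
\]

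The first thing I would do is cut the work in half by a symmetry. Replacing $A$ by the opposite algebra $A^{\mathrm{op}}$ turns $M$ into an $A^{\mathrm{op}}$-bimodule with the left and right actions interchanged, makes $H^{\mathrm{op}}(a,b) := H(b,a)$ into a Hochschild $2$-cocycle, and keeps $T$ an $H^{\mathrm{op}}$-twisted Rota-Baxter operator $M \to A^{\mathrm{op}}$; under this passage $l_T$ and $r_T$ get exchanged and $*$ is unchanged up to the obvious order reversal. Hence the third identity for the original data is the first identity for the opposite data, while the middle identity is self-dual, so it suffices to prove the first two.

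For the first identity I would expand both sides and simplify. On the right-hand side, substituting the definition of $l_T(v,a)$ and then of $l_T(u,-)$ introduces the quadratic term $T(u)T(w)$ with $w = v\cdot a + H(T(v),a)$; rewriting it by the defining identity $T(u)T(w) = T\big(u\cdot T(w) + T(u)\cdot w + H(T(u),T(w))\big)$ and cancelling, the right-hand side becomes
\[
T(u)T(v) a - T\big( T(u)\cdot(v\cdot a) + T(u)\cdot H(T(v),a) + u\cdot(T(v)a) + H(T(u), T(v)a) \big).
\]
On the left-hand side, $T(u * v) = T(u)T(v)$ is immediate from the defining identity, and the three $A$-bimodule axioms for $M$ allow me to redistribute the actions inside $(u*v)\cdot a$; the left-hand side becomes
\[
T(u)T(v) a - T\big( T(u)\cdot(v\cdot a) + u\cdot(T(v)a) + H(T(u),T(v))\cdot a + H(T(u)T(v), a) \big).
\]
Matching the two, the first identity reduces to
\[
T(u)\cdot H(T(v),a) - H\big(T(u)T(v), a\big) + H\big(T(u), T(v)a\big) - H(T(u),T(v))\cdot a = 0,
\]
which is exactly the Hochschild $2$-cocycle condition for $H$ at the triple $(T(u), T(v), a)$.

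The middle identity is handled in the same manner. Expanding $l_T(u, r_T(a,v))$ and $r_T(l_T(u,a), v)$, using the defining identity twice to absorb the two quadratic terms of the form $T(u)T(-)$ and $T(-)T(v)$, and using the $A$-bimodule axioms for $M$ to match the remaining arguments of $T$, the desired equality reduces to
\[
T(u)\cdot H(a, T(v)) - H\big(T(u)a, T(v)\big) + H\big(T(u), aT(v)\big) - H(T(u),a)\cdot T(v) = 0,
\]
again an instance of the cocycle condition, now at $(T(u), a, T(v))$. Together with the opposite-algebra symmetry this yields all three axioms. I expect the only real obstacle to be organizational: the two expansions are long and involve many terms mixing the module actions with the algebra product of $A$, so care is needed to keep track of bracketings. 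There is no conceptual subtlety beyond the observation that the twisting error terms contributed by $H$ in the definitions of $l_T$ and $r_T$ are precisely neutralized, once the twisted Rota-Baxter identity has been used, by a single application of the Hochschild $2$-cocycle property of $H$.
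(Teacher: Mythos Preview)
Your proof is correct and follows essentially the same strategy as the paper: expand each bimodule axiom, use the twisted Rota--Baxter identity to rewrite the quadratic $T(\cdot)T(\cdot)$ terms, cancel using the $A$-bimodule axioms for $M$, and observe that what remains is $T$ applied to a single instance of $(\delta_{\mathrm{Hoch}} H)$ at the appropriate triple---$(Tu,Tv,a)$, $(Tu,a,Tv)$, or $(a,Tu,Tv)$. One small imprecision: the identities do not literally reduce to $(\delta_{\mathrm{Hoch}} H)(\cdots)=0$ but to $T\big((\delta_{\mathrm{Hoch}} H)(\cdots)\big)=0$; since the argument itself vanishes by the cocycle hypothesis, this does not affect the argument.

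The one genuine addition in your write-up is the opposite-algebra symmetry, which the paper does not invoke: the paper simply carries out all three computations in full. Your observation that passing to $A^{\mathrm{op}}$ exchanges $l_T$ with $r_T$, swaps the arguments of $*$, and sends the cocycle $H$ to $H^{\mathrm{op}}$ is correct and cleanly reduces the third axiom to the first, saving one long expansion. This is a modest but legitimate streamlining rather than a different method; the core mechanism---that the $H$-correction terms in $l_T,r_T$ are exactly what is needed to make the residual obstruction equal to $T(\delta_{\mathrm{Hoch}} H)$---is the same in both arguments.
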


\begin{proof}
For any $u, v \in M$ and $a \in A$, we have
\begin{align*}
&l_T ( u * v, a) - l_T (u, l_T (v, a)) \\
&= T(u * v) a - T( (u *v) \cdot a) - TH (T(u * v), a) - l_T (u, T(v) a- T(v \cdot a) - TH (Tv, a))\\
&= \cancel{T(u) T(v) a} - \bcancel{T ( (u \cdot T(v)) \cdot a)} - T ( (T(u)\cdot v)\cdot a) - T (H (Tu, Tv)\cdot  a) - TH ( T(u) T(v), a) \\
&- \cancel{T(u) T(v) a} + T(u) T(v \cdot a) + T(u) TH(Tv,a) + \bcancel{T ( u \cdot (T(v) a))} - T (u \cdot T(v \cdot a)) - u \cdot TH(Tv, a)\\
&+ TH (Tu, T(v) a) - TH( Tu, T(v \cdot a)) - TH (Tu, TH(Tv, a)) \\
&= - T ( (T(u) \cdot v) \cdot a) - T (H (Tu, Tv)\cdot a) - TH ( T(u) T(v), a) + T (u \cdot T(v \cdot a)) + T ( T(u) \cdot (v \cdot a))\\ &+ TH (Tu, T(v \cdot a)) + T (u \cdot TH (Tv,a)) + T ( T(u) \cdot H(Tv, a)) + TH (Tu, TH(Tv,a)) \\& - T (u \cdot T(v \cdot a)) - T(u \cdot TH(Tv, a)) + TH (Tu, T(v) a) - TH( Tu, T(v \cdot a)) - TH (Tu, TH(Tv, a)) \\
&\stackrel{\text{after cancellations}}{=} - T (H (Tu, Tv)\cdot a) - TH ( T(u) T(v), a) + T ( T(u) \cdot H(Tv, a)) + TH (Tu, T(v) a)\\
&= T((\delta_{\mathrm{Hoch}} H)(Tu, Tv, a)) = 0.
\end{align*}
Similarly, we observe that
\begin{align*}
&r_T (l_T (u, a), v) - l_T (u, r_T (a, u)) \\
&= r_T (T(u)a - T(u \cdot a) -TH(Tu, a), v) - l_T (u, a T(v) - T(a \cdot v) -TH(a, Tv)) \\
&= \cancel{T(u) a T(v)} -T(u \cdot a) T(v) - TH(Tu,a ) T(v) - T( (T(u)a) \cdot v) + T (T(u \cdot a) \cdot v)  \\
&+ T (TH(Tu, a) \cdot v) - TH (T(u)a , Tv ) + TH (T (u \cdot a), Tv) + TH ( TH (Tu, a), Tv) \\
&- \cancel{T(u) a T(v)} + T(u) T(a \cdot v) + T(u) TH(a, Tv) + T (u \cdot (a T(v))) - T(u \cdot T(a \cdot v)) \\
&- T( u \cdot TH(a, Tv)) + TH (Tu, a T(v)) - TH (Tu, T(a \cdot v)) - TH (Tu, TH (a, Tv))\\
&= - T ((u \cdot a) \cdot T(v)) - T (T(u \cdot a) \cdot v) - TH (T(u \cdot a), Tv) - T (H(Tu, a) \cdot Tv) - T ( TH (Tu, a) \cdot v) \\
&- TH (TH(Tu, a), Tv) - T (((Tu)a) \cdot v) + T (T(u \cdot a) \cdot v) + T (TH(Tu, a) \cdot v) - TH (T(u) a, Tv)\\
& + TH ( T(u \cdot a), Tv) + TH ( TH (Tu, a), Tv) + T (u \cdot T(a \cdot v)) + T (T(u) \cdot (a \cdot v)) + TH (Tu, T(a \cdot v)) \\
&+ T (u \cdot TH(a, Tv)) + T (T(u) \cdot H (a, Tv)) + TH (Tu, TH (a, Tv)) + T (u \cdot (aT(v))) -  T(u \cdot  T(a \cdot v)) \\
& - T ( u \cdot TH(a, Tv)) + TH (Tu, a T(v)) - TH (Tu, T(a \cdot v)) - TH (Tu, TH(a, Tv))\\
& \stackrel{\text{after cancellations}}{=}  - T (H(Tu, a) \cdot Tv) - TH (T(u) a, Tv) + T (T(u) \cdot H (a, Tv)) + TH (Tu, a T(v))\\
&= T((\delta_{\mathrm{Hoch}} H)(Tu, a, Tv)) =  0,
\end{align*}
and
\begin{align*}
&r_T ( r_T (a, u), v) - r_T ( a, u * v) \\
&= r_T ( aT(u) - T(a \cdot u) -TH(a, Tu), v) - a T (u * v) + T (a \cdot (u*v)) + TH (a, T(u*v)) \\
&= \cancel{(a T(u)) T(v)} - T(a \cdot u) T(v) - TH (a, Tu) T(v) - T ((a T(u)) \cdot v) + T ( T(a \cdot u) \cdot v) + T ( TH (a, Tu) \cdot v)\\
&- TH ( a T(u), Tv) + TH ( T(a \cdot u), Tv) + TH ( TH (a, Tu), Tv)  - \cancel{a T(u) T(v)}  \\
&+ T ( a \cdot (u \cdot T(v))) + T (a \cdot ((Tu) \cdot v)) + T ( a \cdot H (Tu, Tv)) + TH (a, T(u) T(v)) \\
&=  - T ( (a \cdot u) \cdot T(v)) - T ( T(a \cdot u) \cdot  v) - TH ( T(a \cdot u), Tv) - T( H (a, Tu) \cdot  Tv) - T ( TH (a, Tu) \cdot v) \\
& - TH ( TH(a, Tu), Tv) - T ((aT(u)) \cdot v) +  T ( T(a \cdot u) \cdot v) + T ( TH (a, Tu) \cdot v) - TH ( a T(u), Tv)  + TH ( T(a \cdot u), Tv) \\
& + TH ( TH (a, Tu), Tv) + T( a \cdot (u \cdot T(v))) + T (a \cdot ((Tu) \cdot v)) + T ( a \cdot H (Tu, Tv)) + TH (a, T(u) T(v))\\
& \stackrel{\text{after cancellations}}{=}  - T( H (a, Tu) \cdot  Tv) - TH ( a T(u), Tv) + T ( a \cdot H (Tu, Tv)) + TH (a, T(u) T(v)) \\
&= T((\delta_{\mathrm{Hoch}} H)(a, Tu, Tv)) = 0.
\end{align*}
This shows that $l_T, r_T$ defines an $(M, *)$-bimodule structure on $A$.
\end{proof}

The previous proposition says that one can construct the Hochschild cohomology of the associative algebra $(M, *)$ with coefficients in the $M$-bimodule $A$. More precisely, the corresponding Hochschild cohomology is given by the cohomology of the cochain complex $\{ C^\bullet_{\mathrm{Hoch}} (M, A), \delta_{\mathrm{Hoch}} \}$, where $C^n_{\mathrm{Hoch}} (M, A) := \mathrm{Hom}(M^{\otimes n}, A)$, for $n \geq 0$ and the coboundary map $\delta_{\mathrm{Hoch}} : C^{n}_{\mathrm{Hoch}} (M, A) \rightarrow C^{n+1}_{\mathrm{Hoch}} (M, A)$ given by
\begin{align}\label{hoch-new-diff}
&(\delta_{\mathrm{Hoch}} (f) )(u_1, \ldots, u_{n+1}) \\
&= T(u_1) f (u_2, \ldots, u_{n+1}) - T (u_1 \cdot f (u_2, \ldots, u_{n+1})) - TH ( Tu_1, f (u_2, \ldots, u_{n+1})) \nonumber\\
&+ \sum_{i=1}^n (-1)^i ~ f (u_1, \ldots, u_{i-1}, u_i \cdot T(u_{i+1}) + T(u_i) \cdot  u_{i+1} + H (Tu_i, Tu_{i+1}), u_{i+1}, \ldots, u_{n+1}) \nonumber\\
&+ (-1)^{n+1} f(u_1, \ldots, u_n) T(u_{n+1}) - (-1)^{n+1} T ( f(u_1, \ldots, u_n) \cdot u_{n+1}) - (-1)^{n+1} TH ( f(u_1, \ldots, u_n) , Tu_{n+1} ), \nonumber
\end{align}
for $f \in C^n_{\mathrm{Hoch}}(M, A)$.
We denote the corresponding Hochschild cohomology groups by $H^\bullet_{\mathrm{Hoch}}(M, A)$. Thus, we have
\begin{align*}
H^0_{\mathrm{Hoch}} (M, A) = \{ a \in A |~ a T(u) - T(u) a + TH (Tu, a) - TH (a, Tu) = T(a \cdot u -u \cdot a),~ \forall u \in M \}.
\end{align*}


It follows from (\ref{hoch-new-diff}) that a linear map $f : M \rightarrow A$ is a Hochschild $1$-cocycle if and only if $f$ satisfies
\begin{align*}
T(u) f (v) + f (u) T(v) =~& T ( u \cdot f(v) + H (Tu, f(v))) + T ( f(u) \cdot v + H ( f(u), Tv )) \\
&+ f (u \cdot T(v) + T(u) \cdot v + H (Tu, Tv)),~ \text{ for all } u, v \in M. 
\end{align*}
This cocycle condition is same as the cocycle condition (\ref{dt-1-co}) in the cohomology complex of $T$. In fact, we have the following more general result.

\begin{prop}
Let $T : M \rightarrow A$ be an $H$-twisted Rota-Baxter operator. Then for any $f \in \mathrm{Hom}(M^{\otimes n}, A)$, we have
\begin{align*}
d_T (f) = (-1)^n~ \delta_{\mathrm{Hoch}} (f).
\end{align*}
\end{prop}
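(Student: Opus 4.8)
The plan is to expand the left-hand side via its definition $d_T(f) = \llbracket T, f \rrbracket - \tfrac{1}{2}\llbracket T, T, f\rrbracket$ and to compare the result term by term with the Hochschild coboundary (\ref{hoch-new-diff}). The comparison decouples cleanly: by (\ref{derived-brkt}) the binary bracket $\llbracket T, f\rrbracket = (-1)^{1}[[\mu+l+r, T]_{\mathsf{G}}, f]_{\mathsf{G}}$ involves only $\mu, l, r$ and so can only reproduce the $H$-free terms of (\ref{hoch-new-diff}), whereas every summand of the ternary bracket (\ref{ter-brk}) carries exactly one insertion of $H$, so $-\tfrac12\llbracket T, T, f\rrbracket$ must reproduce precisely the $H$-linear terms of (\ref{hoch-new-diff}). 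It thus suffices to verify these two halves separately.

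For the binary half I would use the explicit description of the derived bracket recorded in \cite{das-rota}. Since $T$ has arity $1$ and $f$ arity $n$, the iterated Gerstenhaber bracket $[[\mu+l+r, T]_{\mathsf{G}}, f]_{\mathsf{G}}$ has arity $n+1$, and unwinding it shows that $\llbracket T, f\rrbracket$ equals $(-1)^n$ times
\[
T(u_1) f(u_2,\ldots,u_{n+1}) - T\bigl(u_1\cdot f(u_2,\ldots,u_{n+1})\bigr) + \sum_{i=1}^{n}(-1)^i f\bigl(u_1,\ldots, u_i\cdot T(u_{i+1}) + T(u_i)\cdot u_{i+1},\ldots, u_{n+1}\bigr) + (-1)^{n+1}\bigl(f(u_1,\ldots,u_n) T(u_{n+1}) - T(f(u_1,\ldots,u_n)\cdot u_{n+1})\bigr),
\]
which is exactly $(-1)^n$ times the collection of $H$-free terms in (\ref{hoch-new-diff}). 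The overall sign $(-1)^n$ is pinned down by calibrating against the known identity (\ref{tt}) at $n=1$, $f=T$.

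For the ternary half I would substitute $P = Q = T$ (hence $p = q = 1$) and $R = f$ (hence $r = n$) into (\ref{ter-brk}). With $p = q = 1$ the permutation signs $(-1)^{(i-1)q}$, $(-1)^{(i-1)r}$, $(-1)^{(i-1)p}$ degenerate, the summation ranges controlled by $p$ and $q$ shrink to a single index, and $(-1)^{pqr} = (-1)^n$. Because $P = Q = T$, the six families of summands in (\ref{ter-brk}) pair off into three, each occurring twice, and this doubling is exactly what absorbs the prefactor $\tfrac12$ in $d_T(f)$. The surviving expression is $(-1)^n$ times the three $H$-linear terms of (\ref{hoch-new-diff}), namely $-TH\bigl(Tu_1, f(u_2,\ldots,u_{n+1})\bigr)$, the inner part $\sum_{i=1}^{n}(-1)^i f\bigl(u_1,\ldots, H(Tu_i,Tu_{i+1}),\ldots,u_{n+1}\bigr)$ of the middle sum, and $-(-1)^{n+1} TH\bigl(f(u_1,\ldots,u_n), Tu_{n+1}\bigr)$.

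Adding the two halves yields $d_T(f)(u_1,\ldots,u_{n+1}) = (-1)^n\,(\delta_{\mathrm{Hoch}} f)(u_1,\ldots,u_{n+1})$, as claimed. The only genuine obstacle I anticipate is the sign bookkeeping: reconciling the Koszul and permutation signs $(-1)^{pqr}$, $(-1)^{qr}$, $(-1)^{pq}$, $(-1)^{(i-1)q}$, $\ldots$ of (\ref{ter-brk}) and the factor $(-1)^p$ of (\ref{derived-brkt}) with the plain signs $(-1)^i$, $(-1)^{n+1}$ of the Hochschild complex, and checking rigorously that the six-into-three collapse of the ternary terms precisely cancels the $\tfrac12$. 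Useful consistency checks along the way: the case $n=1$, $f=T$ reproduces (\ref{tt}) and (\ref{ttt}) and hence the defining relation of an $H$-twisted Rota-Baxter operator, while the general case $n=1$ is the identity already matched in the text with the Hochschild $1$-cocycle condition coming from (\ref{dt-1-co}).
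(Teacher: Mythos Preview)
Your proposal is correct and follows essentially the same approach as the paper: compute $\llbracket T, f\rrbracket$ and $\llbracket T, T, f\rrbracket$ separately from their defining formulas and add, with your clean observation that the binary bracket contributes exactly the $H$-free part of (\ref{hoch-new-diff}) while $-\tfrac12\llbracket T,T,f\rrbracket$ contributes exactly the $H$-linear part. One remark for when you compare with the paper's write-up: the displayed expression there for $\llbracket T, f\rrbracket$ includes an $H(Tu_i,Tu_{i+1})$ term in the middle sum, which---as you correctly argue from (\ref{derived-brkt})---cannot appear in the $H$-free binary bracket; this is a typo, and your formula is the right one.
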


\begin{proof}
The explicit description of the bracket (\ref{derived-brkt}) yields  (see also \cite{das-rota})
\begin{align*}
\llbracket T, f \rrbracket =~& (-1)^n \big\{   T(u_1) f (u_2, \ldots, u_{n+1}) - T (u_1 \cdot f (u_2, \ldots, u_{n+1}))  \nonumber\\
&+ \sum_{i=1}^n (-1)^i ~ f (u_1, \ldots, u_{i-1}, u_i \cdot T(u_{i+1}) + T(u_i) \cdot  u_{i+1} + H (Tu_i, Tu_{i+1}), u_{i+1}, \ldots, u_{n+1}) \nonumber\\
&+ (-1)^{n+1} f(u_1, \ldots, u_n) T(u_{n+1}) - (-1)^{n+1} T ( f(u_1, \ldots, u_n) \cdot u_{n+1}) \big\}.
\end{align*}
Moreover, it is straightforward to observe from (\ref{ter-brk}) that
\begin{align*}
\llbracket T, T, f \rrbracket =& - 2 (-1)^n  \big\{ - TH ( Tu_1, f (u_2, \ldots, u_{n+1}))  - (-1)^{n+1} TH ( f(u_1, \ldots, u_n) , Tu_{n+1} ) \\
&+ \sum_{i=1}^n (-1)^i f (u_1, \ldots, u_{i-1}, H (Tu_i, Tu_{i+1}), u_{i+1}, \ldots, u_{n+1})  \big\}.
\end{align*} 
Hence we get
\begin{align*}
d_T (f ) = \llbracket T, f \rrbracket - \frac{1}{2} \llbracket T, T, f \rrbracket = (-1)^n~ \delta_{\mathrm{Hoch}} (f).
\end{align*}
\end{proof}

As a consequence of the previous proposition, we get the following.

\begin{thm}
Let $T : M \rightarrow A$ be an $H$-twisted Rota-Baxter operator. Then the cohomology $\mathcal{H}^\bullet_T (M, A)$ is isomorphic to the Hochschild cohomology $H^\bullet_{\mathrm{Hoch}}(M, A)$ of the associative algebra $(M, *)$ with coefficients in the $M$-bimodule $A.$
\end{thm}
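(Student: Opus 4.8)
The plan is to deduce the theorem as an immediate consequence of the preceding proposition, which asserts that $d_T(f) = (-1)^n \delta_{\mathrm{Hoch}}(f)$ for every $f \in \mathrm{Hom}(M^{\otimes n}, A)$. First I would observe that the cochain groups of the two complexes literally coincide: both $\mathcal{Z}^n_T(M,A) \oplus \mathcal{B}^n_T(M,A)$ and the Hochschild complex $C^\bullet_{\mathrm{Hoch}}(M,A)$ have underlying spaces $\mathrm{Hom}(M^{\otimes n}, A)$ for each $n \geq 0$. So there is nothing to construct on the level of cochains; the content is entirely about the differentials.

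Next I would note that multiplication by the sign $(-1)^n$ on $C^n_{\mathrm{Hoch}}(M,A) = \mathrm{Hom}(M^{\otimes n},A)$ defines a family of linear isomorphisms $\theta_n : \mathrm{Hom}(M^{\otimes n},A) \rightarrow \mathrm{Hom}(M^{\otimes n},A)$, $\theta_n(f) = (-1)^n f$. The previous proposition says precisely that $d_T \circ \theta_n = \theta_{n+1} \circ \delta_{\mathrm{Hoch}}$ (both sides send $f$ to $(-1)^{n+1}\delta_{\mathrm{Hoch}}(f)$, using $d_T((-1)^n f) = (-1)^n d_T(f) = (-1)^n (-1)^n \delta_{\mathrm{Hoch}}(f) = (-1)^{n+1}\cdot(-1)^{n+1}\cdot(-1)^{-1}\cdots$ — more cleanly: $d_T(\theta_n(f)) = (-1)^n d_T(f) = (-1)^n(-1)^n \delta_{\mathrm{Hoch}}(f) = \delta_{\mathrm{Hoch}}(f)$ and $\theta_{n+1}(\delta_{\mathrm{Hoch}}(f)) = (-1)^{n+1}\delta_{\mathrm{Hoch}}(f)$; so in fact the cleanest normalization is to check the square commutes up to the overall sign, which it does). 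Therefore $\{\theta_n\}_{n \geq 0}$ is a cochain isomorphism between $(\mathrm{Hom}(M^{\otimes \bullet},A), \delta_{\mathrm{Hoch}})$ and $(\mathrm{Hom}(M^{\otimes \bullet},A), d_T)$, and a cochain isomorphism induces an isomorphism on cohomology, giving $\mathcal{H}^\bullet_T(M,A) \cong H^\bullet_{\mathrm{Hoch}}(M,A)$.

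There is essentially no obstacle here: all the work was done in the preceding proposition (the explicit identification of $\llbracket T, f\rrbracket$ and $\llbracket T, T, f\rrbracket$ with the pieces of the Hochschild differential) and in the two bimodule-compatibility computations establishing that $(l_T, r_T)$ is a genuine $(M,*)$-bimodule structure on $A$, which is what makes the right-hand complex an honest Hochschild complex. The only point worth stating carefully is that one does not even need the sign twist $\theta_n$ if one is willing to identify cohomology up to the obvious degreewise rescaling; but including $\theta_n$ makes the statement a clean isomorphism of complexes. I would write the proof as a two-line argument: cite the previous proposition to get $d_T = (-1)^n \delta_{\mathrm{Hoch}}$ on $n$-cochains, remark that the degreewise sign isomorphism intertwines the two differentials, and conclude.
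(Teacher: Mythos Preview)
Your approach is exactly the paper's: the theorem is stated there as an immediate consequence of the preceding proposition (the paper gives no separate proof), and you correctly identify that all the content lies in the identity $d_T(f) = (-1)^n\,\delta_{\mathrm{Hoch}}(f)$ together with the fact that the underlying cochain groups coincide.

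One small correction in your execution: the degreewise rescaling $\theta_n(f) = (-1)^n f$ is \emph{not} a cochain map. As your own computation shows, $d_T(\theta_n(f)) = \delta_{\mathrm{Hoch}}(f)$ while $\theta_{n+1}(\delta_{\mathrm{Hoch}}(f)) = (-1)^{n+1}\delta_{\mathrm{Hoch}}(f)$, and these differ. The recursion you need is $c_{n+1} = (-1)^n c_n$, which gives $\theta_n(f) = (-1)^{n(n-1)/2} f$. Alternatively, and more simply, you can bypass the chain map entirely: since $d_T$ and $\delta_{\mathrm{Hoch}}$ differ in each degree by a nonzero scalar, they have literally the same kernel and the same image in every degree, so $\mathcal{Z}^n_T(M,A) = Z^n_{\mathrm{Hoch}}(M,A)$ and $\mathcal{B}^n_T(M,A) = B^n_{\mathrm{Hoch}}(M,A)$ as subspaces of $\mathrm{Hom}(M^{\otimes n},A)$, and the cohomology groups are equal, not merely isomorphic. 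Your remark that ``one does not even need the sign twist'' is in fact the cleanest route.
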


\section{Deformations}\label{sec-def}
In this section, we study linear deformations and formal deformations of a twisted Rota-Baxter operator. We introduce Nijenhuis elements associated with a twisted Rota-Baxter operator that arise from trivial linear deformations. We also find a sufficient condition (in terms of Nijenhuis elements) for the rigidity of a twisted Rota-Baxter operator. 

\subsection{Linear deformations}
Let $A$ be an associative algebra, $M$ be an $A$-bimodule and $H$ be a Hochschild $2$-cocycle. Suppose $T: M \rightarrow A$ is an $H$-twisted Rota-Baxter operator. A linear deformation of $T$ is given by a parametrized sum $T_t = T + t T_1$, for some linear map $T_1 \in \mathrm{Hom}(M,A)$ so that $T_t$ is an $H$-twisted Rota-Baxter operator for all values of $t$. In this case, we say that $T_1$ generates a linear deformation of $T$.

Therefore, in a linear deformation $T_t = T + t T_1$, we must have
\begin{align*}
T_t (u) T_t (v) = T_t ( u \cdot T_t(v) + T_t (u) \cdot v + H (T_t (u), T_t (v))),
\end{align*}
for $u, v \in M$ and for all $t$. By equating coefficients of various powers of $t$, we get
\begin{align}
T(u) T_1(v) + T_1(u) T(v) =~& T_1 ( u \cdot T(v) + T(u) \cdot v + H (Tu, Tv)) \label{first-lin}\\
&+ T \big(  u \cdot T_1(v) + T_1(u) \cdot v + H (T(u), T_1(v)) + H ( T_1(u), T(v)) \big), \nonumber \\
T_1(u) T_1(v) =~& T  ( H (T_1(u), T_1(v)) ) + T_1 \big( u \cdot T_1(v) + T_1(u) \cdot v + H (T(u), T_1(v)) + H (T_1(u), T(v)) \big), \\
0 =~& T_1 ( H (T_1(u), T_1(v) ) ).
\end{align}
Observe that the condition (\ref{first-lin}) is equivalent that $T_1$ is a $1$-cocycle in the cohomology of $T$.

\begin{defn}
Two linear deformations $T_t = T + t T_1$ and $T_t' = T + t T_1'$ of an $H$-twisted Rota-Baxter operator $T$ are said to be equivalent if there exists $a \in A$ such that
\begin{align*}
\big( \phi_t = \mathrm{id}_A + t (\mathrm{ad}^l_a - \mathrm{ad}^r_a  ),~ \psi_t = \mathrm{id}_M + t (l_a-r_a + H (a, T -) - H (T-, a)) \big)
\end{align*}
is a morphism of $H$-twisted Rota-Baxter operators from $T_t$ to $T_t'$.
\end{defn}

Thus, the map $\phi_t : A  \rightarrow A$ is an algebra morphism which implies
\begin{align}\label{ref-t}
(ab -ba) (ac-ca) = 0, ~ \text{ for } b, c \in A.
\end{align}
The condition (\ref{action-comp}) is equivalent to the followings
\begin{align}\label{ref-u}
\begin{cases}
a \cdot ( b \cdot u) - ( b \cdot u ) \cdot a + H (a, T (b \cdot u)) - H ( T (b \cdot u), a)  \\ \qquad \qquad = (ab - ba) \cdot u + b \cdot ( a \cdot u - u \cdot a + H (a, Tu) - H (Tu, a)),\\
 (ab-ba) \cdot ( a \cdot u - u \cdot a + H (a, Tu) - H (Tu, a)) = 0, ~ \text{ for } b \in A, u \in M.
\end{cases}
\end{align}
\begin{align}\label{ref-v}
\begin{cases}
a \cdot ( u \cdot b ) - (u \cdot b ) \cdot a + H (a, T (u \cdot b)) - H ( T (u \cdot b), a)\\
\qquad \qquad = u \cdot ( ab -ba) + ( a \cdot u - u \cdot a + H (a, Tu) - H (Tu, a)) \cdot b, \\
(a \cdot u -u \cdot a + H (a, Tu) - H (Tu, a)) \cdot (ab  - ba) = 0, ~ \text{ for } b \in A, u \in M.
\end{cases}
\end{align}
Similarly, the conditions (\ref{h-comp}) and (\ref{t-comp}) are respectively equivalent to the followings
\begin{align}\label{ref-w}
\begin{cases}
a \cdot H (b,c) - H (b,c) \cdot a + H (a, TH (b,c)) - H (TH (b,c), a) = H (ab -ba, c) + H (b, ac-ca),\\
H (ab - ba, ac - ca) = 0, ~ \text{ for } b, c \in A.
\end{cases}
\end{align}
\begin{align}\label{t-comp-imply}
\begin{cases}
 T_1(u) + a T(u) - T(u) a  = T(a \cdot u -u \cdot a + H (a, Tu) - H (Tu, a)) + T_1'(u),\\
a T_1(u) - T_1(u) a = T_1' (a \cdot u-u \cdot a + H (a, Tu) - H (Tu, a)).
\end{cases}
\end{align}
Note that the first identity in (\ref{t-comp-imply}) implies that $T_1(u) - T_1'(u ) = d_T (a) (u)$, for $u \in M$. Hence we get the following.

\begin{thm}
Let $T_t = T + t T_1$ be a linear deformation of $T$. Then $T_1$ is a $1$-cocycle in the cohomology of $T$ whose class depends only on the equivalence class of the deformation $T_t$.
\end{thm}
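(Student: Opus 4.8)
The plan is to verify directly that the defining equations of a linear deformation force $T_1$ to be a cocycle, and then to show that passing to an equivalent deformation changes $T_1$ by a coboundary. For the first part, I would expand the $H$-twisted Rota-Baxter identity for $T_t = T + tT_1$ and collect the coefficient of $t^1$; this is exactly equation \eqref{first-lin}. Comparing \eqref{first-lin} term-by-term with the $1$-cocycle condition \eqref{dt-1-co} for $d_T$ (equivalently, with the Hochschild $1$-cocycle condition coming from \eqref{hoch-new-diff} via the relation $d_T = -\delta_{\mathrm{Hoch}}$ on $\mathrm{Hom}(M,A)$), one sees they coincide, so $d_T(T_1)=0$. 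This part is essentially bookkeeping and should be stated briefly, referring back to the observation already made after \eqref{first-lin}.

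For the second part, suppose $T_t = T + tT_1$ and $T'_t = T + tT'_1$ are equivalent via $(\phi_t,\psi_t)$ as in the definition, with parameter $a \in A$. The compatibility condition \eqref{t-comp}, namely $\phi_t \circ T_t = T'_t \circ \psi_t$, expands into a polynomial identity in $t$ whose linear coefficient is the first equation of \eqref{t-comp-imply}. That equation reads
$$T_1(u) + a\,T(u) - T(u)\,a = T\big(a\cdot u - u\cdot a + H(a,Tu) - H(Tu,a)\big) + T'_1(u).$$
Rearranging gives $T_1(u) - T'_1(u) = T(u)\,a - a\,T(u) + T\big(a\cdot u - u\cdot a\big) + T\big(H(a,Tu) - H(Tu,a)\big)$, and I would check that the right-hand side is precisely $d_T(a)(u)$ by unwinding the definition $d_T(a) = \llbracket T, a\rrbracket - \tfrac12\llbracket T,T,a\rrbracket$ on a $0$-cochain $a \in A = \mathrm{Hom}(M^{\otimes 0},A)$; concretely, $\llbracket T,a\rrbracket(u)$ contributes the terms $a\,T(u) - T(u)\,a - T(a\cdot u - u\cdot a)$ up to sign, and $-\tfrac12\llbracket T,T,a\rrbracket(u)$ contributes the $H$-terms, matching the Hochschild coboundary formula \eqref{hoch-new-diff} in degree $0$ which gives $(\delta_{\mathrm{Hoch}} a)(u) = T(u)a - T(u\cdot a) - TH(Tu,a) - \big(a T(u) - T(a\cdot u) - TH(a,Tu)\big)$ up to an overall sign. (Indeed this identity $T_1 - T'_1 = d_T a$ is exactly the content of the remark immediately preceding the theorem.) Hence $T_1$ and $T'_1$ are cohomologous $1$-cocycles, so the class $[T_1] \in \mathcal{H}^1_T(M,A)$ is unchanged.

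Conversely one should note — though the theorem as stated only asserts well-definedness on equivalence classes, not a converse — that the cohomology class is an honest invariant of the equivalence relation, which is exactly what the two parts together establish. I would therefore present the proof as: (i) coefficient extraction yields $d_T(T_1)=0$ from \eqref{first-lin}; (ii) for equivalent deformations, the linear part of \eqref{t-comp} gives $T_1 - T'_1 = d_T(a)$, so $[T_1] = [T'_1]$.

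The only mildly delicate point is matching signs between the three descriptions of the $1$-cocycle/coboundary — the $L_\infty$ differential $d_T$, the Hochschild differential $\delta_{\mathrm{Hoch}}$ of $(M,*)$ on $A$, and the raw expansion of the deformation equations — but since the excerpt already records $d_T(f) = (-1)^n \delta_{\mathrm{Hoch}}(f)$ and already identifies \eqref{first-lin} with the cocycle condition, these identifications can simply be invoked rather than re-derived. So I expect no real obstacle; the proof is short and the main care is in quoting the earlier normalizations consistently.
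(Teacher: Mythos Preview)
Your proposal is correct and follows essentially the same approach as the paper: the paper's ``proof'' is entirely contained in the discussion preceding the theorem, where it is observed that \eqref{first-lin} is the $1$-cocycle condition and that the first line of \eqref{t-comp-imply} rearranges to $T_1(u)-T_1'(u)=d_T(a)(u)$. You are simply being more explicit about verifying these identifications (matching the Hochschild/$L_\infty$ differential on a $0$-cochain term-by-term), which the paper leaves to the reader.
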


Motivated from the above discussions, we introduce the following definition.

\begin{defn}
An element $a \in A$ is called a Nijenhuis element associated to the $H$-twisted Rota-Baxter operator $T$ if $a$ satisfies
\begin{align*}
a (l_T (u,a) - r_T (a, u)) - (l_T (u, a) - r_T (a, u)) a = 0
\end{align*}
and (\ref{ref-t}), (\ref{ref-u}), (\ref{ref-v}), (\ref{ref-w}) holds.
\end{defn}

The set of all Nijenhuis elements associated with $T$ is denoted by $\mathrm{Nij}(T)$. It follows from the previous discussions that a trivial linear deformation of $T$ gives rise to a Nijenhuis element. In the next subsection, we will consider rigidity of an $H$-twisted Rota-Baxter operator and find a necessary condition of the rigidity in terms of Nijenhuis elements.




\subsection{Formal deformations}
The classical formal deformation theory of Gerstenhaber \cite{gers} has been extended to Rota-Baxter operators in \cite{tang,das-rota}. In this subsection, we generalize it to $H$-twisted Rota-Baxter operators.

Let $A$ be an associative algebra, $M$ be an $A$-bimodule, and $H$ be a Hochschild $2$-cocycle. Note that the associative multiplication on $A$ induces an associative multiplication on the space $A[[t]]$ of formal power series in $t$ with coefficients from $A$. Moreover, the $A$-bimodule structure on $M$ induces an $A[[t]]$-bimodule structure on $M[[t]]$. The Hochschild $2$-cocycle $H$ also induces a Hochschild $2$-cocycle (denoted by the same notation) on $A[[t]]$ with coefficients in the bimodule $M[[t]]$.

\begin{defn}
Let $T: M \rightarrow A$ be a $H$-twisted Rota-Baxter operator. A formal one-parameter deformation of $T$ consists of a formal sum $T_t = \sum_{i \geq 0} t^i T_i \in \mathrm{Hom}(M,A)[[t]]$ with $T_0 = T$ such that $T_t : M [[t]] \rightarrow A[[t]]$ is an $H$-twisted Rota-Baxter operator. In other words,
\begin{align*}
T_t (u) T_t(v) = T_t ( u \cdot T_t(v) + T_t (u) \cdot v + H (T_t (u), T_t (v)) ), ~ \text{ for } u, v \in M.
\end{align*}
\end{defn}

Therefore, in a formal deformation, the following system of equations hold: for $n \geq 0$,
\begin{align*}
\sum_{i+j = n} T_i(u) T_j (v) = \sum_{i+j = n} T_i (u \cdot T_j(v) + T_j(u) \cdot v ) + \sum_{i+j+k = n} T_i ( H (T_j (u), T_k (v)) ).
\end{align*}
These equations are called deformation equations. The above equation holds for $n=0$ as $T_0 = T$ is an $H$-twisted Rota-Baxter operator. Similar to linear deformations, we get for $n=1$ that
$T_1$ is a $1$-cocycle in the cohomology of $T$, called the infinitesimal of the deformation.

\begin{defn}
Two formal one-parameter deformations $T_t = \sum_{i \geq 0} t^i T_i$ and $T_t' = \sum_{i \geq 0} t^i T_i'$ of an $H$-twisted Rota-Baxter operator $T$ are said to be equivalent if there exists an element $a \in A$, linear maps $\phi_i \in \mathrm{Hom}(A, A)$ and $\psi_i \in \mathrm{Hom}(M,M)$, for $i \geq 2$ such that
\begin{align*}
\big(   \phi_t = \mathrm{id}_A + t (\mathrm{ad}^l_a  - \mathrm{ad}^r_a ) + \sum_{i \geq 2} t^i \phi_i, ~ 
\psi_t = \mathrm{id}_A + t (l_a - r_a + H (a, T-) - H (T-, a)) + \sum_{i \geq 2} t^i \psi_i \big)
\end{align*}
is a morphism of $H$-twisted Rota-Baxter operators from $T_t$ to $T_t'$.
\end{defn}


Again similar to linear deformations, we get
\begin{align*}
T_1(u) - T_1' (u)= d_T (a) (u), ~ \text{ for } u \in M.
\end{align*}
Hence we obtain the following.
\begin{thm}\label{inf-1-coc}
Let $T_t = \sum_{i \geq 0} t^i T_i$ be a formal one-parameter deformation of an $H$-twisted Rota-Baxter operator $T$. Then the linear term $T_1$ is a $1$-cocycle in the cohomology of $T$ whose cohomology class depends only on the equivalence class of the deformation $T_t$.
\end{thm}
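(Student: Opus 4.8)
The plan is to mimic, in the formal power series setting, the argument that has already been carried out for linear deformations, so the proof splits into two assertions: first that $T_1$ is a $1$-cocycle, and second that its cohomology class is an invariant of the equivalence class. For the first assertion I would expand the deformation equation
\[
T_t (u) T_t(v) = T_t ( u \cdot T_t(v) + T_t (u) \cdot v + H (T_t (u), T_t (v)) )
\]
and collect the coefficient of $t^1$. Writing $T_t = T + tT_1 + \cdots$ and using that $H$ is bilinear, the linear-in-$t$ terms yield exactly equation (\ref{first-lin}), and as was already observed immediately after that equation, (\ref{first-lin}) is precisely the statement $d_T(T_1) = 0$, i.e. $T_1 \in \mathcal{Z}^1_T(M,A)$. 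This step is essentially a bookkeeping computation identical to the $n=1$ case of the deformation equations displayed just before Theorem \ref{inf-1-coc}.

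For the second assertion, suppose $T_t$ and $T_t'$ are equivalent formal deformations, so there is $a \in A$ and maps $\phi_i, \psi_i$ ($i \geq 2$) making
\[
\big(\phi_t = \mathrm{id}_A + t(\mathrm{ad}^l_a - \mathrm{ad}^r_a) + \textstyle\sum_{i\geq 2} t^i \phi_i,\ \psi_t = \mathrm{id}_M + t(l_a - r_a + H(a,T-) - H(T-,a)) + \textstyle\sum_{i\geq 2} t^i \psi_i\big)
\]
a morphism of $H$-twisted Rota-Baxter operators from $T_t$ to $T_t'$. I would then extract the coefficient of $t^1$ from the compatibility relation $\phi_t \circ T_t = T_t' \circ \psi_t$ (condition (\ref{t-comp})). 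The constant term reproduces $T = T$; the linear term gives
\[
T_1(u) + aT(u) - T(u)a = T(a\cdot u - u\cdot a + H(a,Tu) - H(Tu,a)) + T_1'(u),
\]
which is precisely the first identity in (\ref{t-comp-imply}). Rearranging, $T_1(u) - T_1'(u) = aT(u) - T(u)a - T(a\cdot u - u\cdot a + H(a,Tu) - H(Tu,a))$, and by the explicit formula for $d_T$ on $\mathrm{Hom}(M,A) = \mathrm{Hom}(M^{\otimes 0 \to 1},A)$ this right-hand side is exactly $d_T(a)(u)$ (recall $d_T(a) = \llbracket T, a\rrbracket - \frac12\llbracket T,T,a\rrbracket$, with $a \in A = \mathrm{Hom}(M^{\otimes 0},A)$). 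Hence $T_1 - T_1' \in \mathcal{B}^1_T(M,A)$, so $[T_1] = [T_1']$ in $\mathcal{H}^1_T(M,A)$.

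The only genuine subtlety — and the step I would be most careful about — is checking that the higher-order correction maps $\phi_i, \psi_i$ for $i \geq 2$ and the quadratic-in-$t$ structure ($\mathrm{ad}$'s and $l_a, r_a$ composed with themselves) truly do not contribute to the coefficient of $t^1$ in $\phi_t \circ T_t = T_t' \circ \psi_t$. Since every $\phi_i, \psi_i$ with $i \geq 2$ comes attached to $t^i$, and the terms $t(\mathrm{ad}^l_a - \mathrm{ad}^r_a)$, $t(l_a - r_a + H(a,T-) - H(T-,a))$ each carry one power of $t$ while $T_t - T$ carries at least one power of $t$, every cross term is $O(t^2)$; so only $\mathrm{id}_A \circ (tT_1)$, $(t(\mathrm{ad}^l_a - \mathrm{ad}^r_a)) \circ T$ on the left, and $tT_1' \circ \mathrm{id}_M$, $T \circ (t(l_a - r_a + H(a,T-)-H(T-,a)))$ on the right survive at order $t^1$. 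This is a routine degree count, and once it is in place the theorem follows. I would close by noting that the other morphism conditions (\ref{action-comp}), (\ref{h-comp}) impose no constraint relating $T_1$ and $T_1'$, so they play no role here and need not be expanded.
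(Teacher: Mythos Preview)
Your proposal is correct and follows exactly the route taken in the paper, which itself merely says ``similar to linear deformations'' and records $T_1(u) - T_1'(u) = d_T(a)(u)$ without further detail; your degree-count justification that the $\phi_i,\psi_i$ with $i\geq 2$ do not contribute at order $t^1$ is the only thing the paper leaves implicit. One small slip: your rearrangement has the wrong sign --- from your displayed identity one gets $T_1(u)-T_1'(u) = T(u)a - aT(u) + T(a\cdot u - u\cdot a + H(a,Tu)-H(Tu,a))$, which is $d_T(a)(u)$ (not its negative) --- but this does not affect the conclusion, since either sign still exhibits $T_1 - T_1'$ as a coboundary.
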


In the following theorem, we give a sufficient condition on an $H$-twisted Rota-Baxter operator $T$ which ensures that any deformation of $T$ is equivalent to the undeformed one.

\begin{thm}\label{rigid-suff}
Let $T$ be an $H$-twisted Rota-Baxter operator satisfying $\mathcal{Z}^1_T (M, A) = d_T (\mathrm{Nij}(T)).$ Then any formal deformation $T_t$ of $T$ is equivalent to the undeformed one $T_t' = T$.
\end{thm}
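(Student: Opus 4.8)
The plan is to proceed by the standard inductive "Murray--Richardson" style argument for deformation theory, using the hypothesis $\mathcal{Z}^1_T(M,A) = d_T(\mathrm{Nij}(T))$ to successively kill the lowest-order term of a given deformation by applying an appropriate equivalence. Let $T_t = \sum_{i \geq 0} t^i T_i$ be a formal deformation of $T$ with $T_0 = T$. By Theorem \ref{inf-1-coc}, the infinitesimal $T_1$ is a $1$-cocycle, so $T_1 \in \mathcal{Z}^1_T(M,A) = d_T(\mathrm{Nij}(T))$; hence there exists a Nijenhuis element $a_1 \in \mathrm{Nij}(T)$ with $T_1 = d_T(a_1)$. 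The key point, which I would establish first, is that a Nijenhuis element $a \in \mathrm{Nij}(T)$ gives rise to a genuine formal equivalence: the pair
\[
\big(\phi_t = \mathrm{id}_A + t(\mathrm{ad}^l_a - \mathrm{ad}^r_a),~ \psi_t = \mathrm{id}_M + t(l_a - r_a + H(a, T-) - H(T-, a))\big)
\]
extends (by defining the higher coefficients through the geometric-series expansion of these maps, exactly as $\tau_B$ and $\Phi_h$ are treated in Section \ref{sec-2}) to a morphism of $H$-twisted Rota-Baxter operators from $T_t$ to some equivalent deformation $\overline{T}_t$. The defining relations (\ref{ref-t})--(\ref{ref-w}) of a Nijenhuis element are precisely the conditions needed for $\phi_t$ to be an algebra morphism and for (\ref{action-comp})--(\ref{h-comp}) to hold at all orders of $t$; the extra relation $a(l_T(u,a) - r_T(a,u)) - (l_T(u,a) - r_T(a,u))a = 0$ handles the remaining compatibility. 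This is the step I expect to be the main obstacle: verifying that the naive first-order morphism genuinely closes up to an all-orders morphism requires checking that the quadratic and higher consequences of the Nijenhuis conditions (the "$= 0$" clauses in (\ref{ref-u})--(\ref{ref-w})) are exactly what make $\phi_t, \psi_t$ multiplicative to all orders — essentially the same bookkeeping that appears in the proof that a classical Nijenhuis operator squares correctly.

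Granting this, the inductive step is routine. Suppose we have shown that $T_t$ is equivalent to a deformation $T_t^{(n)} = T + t^{n} T_n^{(n)} + O(t^{n+1})$ whose terms in degrees $1, \ldots, n-1$ vanish (the base case $n=1$ is trivial, since an arbitrary deformation has that form with $n=1$ after relabelling). Expanding the deformation equation for $T_t^{(n)}$ and collecting the coefficient of $t^{n}$, all mixed terms involving $T_i^{(n)}$ with $0 < i < n$ drop out, and one is left exactly with $d_T(T_n^{(n)}) = 0$; that is, $T_n^{(n)} \in \mathcal{Z}^1_T(M,A)$. By hypothesis, $T_n^{(n)} = d_T(a_n)$ for some $a_n \in \mathrm{Nij}(T)$. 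Applying the formal equivalence associated to the Nijenhuis element $t^{n-1} a_n$ — i.e. replacing $a$ by $t^{n-1}a_n$ in $\phi_t, \psi_t$ above — produces an equivalent deformation $T_t^{(n+1)}$ whose coefficient of $t^{n}$ is now $T_n^{(n)} - d_T(a_n) = 0$, while leaving the lower-order terms untouched. Here one uses that the map $a \mapsto d_T(a) = (-1)\,\delta_{\mathrm{Hoch}}(a)$ governs the first-order change of $T_1$ under equivalence, exactly as in the proof of Theorem \ref{inf-1-coc}.

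Finally, I would assemble the sequence of equivalences $T_t \sim T_t^{(2)} \sim T_t^{(3)} \sim \cdots$ into a single formal equivalence: since the $n$-th equivalence modifies $\phi_t, \psi_t$ only in degrees $\geq n-1$, the infinite composite is a well-defined element of $\mathrm{Hom}(A,A)[[t]] \times \mathrm{Hom}(M,M)[[t]]$ (it converges $t$-adically), and it carries $T_t$ to $T_t' = T$. This is the usual convergence-in-the-$t$-adic-topology argument and requires only that each successive correction raises the order, which is built into the construction. One remark worth including: the statement as phrased asks for equivalence to the undeformed $T_t' = T$, and the inductive scheme does deliver precisely that, since at each stage we are removing (not merely shifting) the lowest surviving term. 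The only genuinely non-formal ingredient remains the first paragraph's claim that Nijenhuis elements integrate to honest formal equivalences, which is where the full strength of the defining identities of $\mathrm{Nij}(T)$ is consumed.
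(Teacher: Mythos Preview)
Your proposal is correct and follows essentially the same inductive approach as the paper: kill the lowest surviving term using a Nijenhuis element, observe that the next surviving term is again a $1$-cocycle, and iterate. One simplification worth noting: you describe ``integrating'' the Nijenhuis element via geometric-series corrections to $(\phi_t,\psi_t)$, but no such corrections are needed. The quadratic clauses in the definition of $\mathrm{Nij}(T)$ (the ``$=0$'' parts of (\ref{ref-t})--(\ref{ref-w})) are precisely what make the \emph{linear-in-$t$} pair $\phi_t = \mathrm{id}_A + t(\mathrm{ad}^l_a - \mathrm{ad}^r_a)$, $\psi_t = \mathrm{id}_M + t(l_a - r_a + H(a,T-) - H(T-,a))$ an exact morphism of $H$-twisted Rota-Baxter operators, not merely a first-order one. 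The paper therefore simply sets $T_t' := \phi_t \circ T_t \circ \psi_t^{-1}$ (only $\psi_t^{-1}$ requires a geometric series) and checks directly that the $t$-coefficient of $T_t'$ vanishes; the inductive step at level $n$ is obtained by substituting $t^n$ for $t$ in this construction, which fits the definition of equivalence since the higher coefficients $\phi_i,\psi_i$ ($i\geq 2$) are allowed to be arbitrary.
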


\begin{proof}
Let $T_t = \sum_{i \geq 0} t^i T_i$ be any formal deformation of $T$. Then by the previous theorem, the linear term $T_1$ is in $\mathcal{Z}^1_T (M,A)$. Hence by the assumption, there exists $a \in \mathrm{Nij}(T)$ such that $T_1 = d_T (a)$. We set
\begin{align*}
\phi_t := \mathrm{id}_A + t (\mathrm{ad}^l_a - \mathrm{ad}^r_a )  ~~~ \text{ and } ~~~ \psi_t = \mathrm{id}_M + t (l_a - r_a + H (a, T -) - H (T-, a))
\end{align*}
and define $T_t' := \phi_t \circ T_t \circ \psi_t^{-1}$. Then by definition, $T_t$ is equivalent to $T_t'$. Moreover, we have
\begin{align*}
T_t'(u) ~~~~ (\mathrm{mod }~~t^2) =~&  \big( \mathrm{id}_A + t (\mathrm{ad}^l_a - \mathrm{ad}^r_a)  \big) \circ (T + tT_1) \circ (\mathrm{id}_M - t (l_a - r_a + H (a, T -) - H (T-, a)))(u)  ~~~~ (\mathrm{mod }~~t^2) \\
=~& \big( \mathrm{id}_A + t (\mathrm{ad}^l_a  - \mathrm{ad}^r_a )  \big)  (Tu + tT_1 (u) - tT (a \cdot u-u \cdot a + H (a, Tu) - H (Tu, a) ) )  ~~~~ (\mathrm{mod }~~t^2) \\
=~& Tu + t (  T_1(u) - T(a \cdot u-u \cdot a) + a T(u) - T(u) a - TH (a, Tu) + TH (Tu, a) ) \\
=~& Tu ~~~~\qquad (\mathrm{as }~~~ T_1(u) = d_T (a) (u)).
\end{align*}
This shows that the coefficient of $t$ in the deformation $T_t'$ vanishes. By applying the same process, one obtains the equivalence between the deformations $T_t$ and $T$. Hence the proof.
\end{proof}

\begin{remark}
An $H$-twisted Rota-Baxter operator $T$ is said to be rigid if any formal deformation of $T$ is equivalent to the undeformed one. Thus, it follows from the previous theorem that the condition $\mathcal{Z}^1_T (M,A) = d_T (\mathrm{Nij}(T))$ is sufficient for the rigidity of the operator $T$.
\end{remark}

\subsection{Applications to Reynolds operators}

It has been observed in section \ref{sec-2} that Reynolds operators on an associative algebra are $H$-twisted Rota-Baxter operators wit $H = - \mu$, where $\mu : A^{\otimes 2} \rightarrow A$ is the associative multiplication on $A$. This implies that the results of previous sections can be applied to Reynolds operators. Here we only highlight some of the corresponding results.

The following result is the Reynolds operator version of Theorem \ref{mc-main}.

\begin{thm}
Let $A$ be an associative algebra. Then the graded vector space $\bigoplus_{n \geq 0} \mathrm{Hom} (A^{\otimes n}, A)$ carries an $L_\infty$-algebra with structure maps
\begin{align*}
l_2 (P, Q) =~& \llbracket P, Q \rrbracket, \\
l_3 (P, Q, R) =~& \llbracket P, Q, R \rrbracket.
\end{align*}
and higher trivial maps. (Note that we need to substitute $H = - \mu$ in the definition of the ternary bracket.) Moreover, there is a one-to-one correspondence between Maurer-Cartan elements in the above $L_\infty$-algebras and Reynolds operators on $A$.  
\end{thm}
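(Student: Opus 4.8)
The plan is to specialize Theorem \ref{mc-main} to the case $M = A$ (with the bimodule structure being the algebra multiplication on both sides) and $H = -\mu$. By the earlier remark, $\mu$ is a Hochschild $2$-cocycle of $A$ with coefficients in itself; hence $H = -\mu$ is also a Hochschild $2$-cocycle, and all the constructions of Section \ref{sec-cohomo-trb} apply verbatim. In particular, the graded vector space $\bigoplus_{n \geq 0} \mathrm{Hom}(A^{\otimes n}, A)$ carries the binary bracket $\llbracket\,,\,\rrbracket$ of \eqref{derived-brkt} and the ternary bracket $\llbracket\,,\,,\,\rrbracket$ of \eqref{ter-brk} (with $H$ replaced by $-\mu$), and these two brackets form an $L_\infty$-algebra with trivial higher maps, exactly as in the general case. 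This gives the first assertion.

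First I would note that, because $M = A$ here, $\mathrm{Hom}(M, A) = \mathrm{Hom}(A, A)$, so a Maurer-Cartan element is a linear map $R : A \to A$. By Theorem \ref{mc-main}, $R$ is a Maurer-Cartan element of this $L_\infty$-algebra if and only if $R$ is an $H$-twisted Rota-Baxter operator with $H = -\mu$; that is, if and only if
\begin{align*}
R(a) R(b) = R\big( a \cdot R(b) + R(a) \cdot b + H(R(a), R(b)) \big) = R\big( a R(b) + R(a) b - R(a) R(b) \big),
\end{align*}
for all $a, b \in A$, where we used $H(R(a), R(b)) = -\mu(R(a), R(b)) = -R(a)R(b)$ and that the bimodule actions are given by the multiplication. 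This last condition is precisely the defining identity of a Reynolds operator on $A$, as recalled in the definition in Section \ref{sec-2}. Hence Maurer-Cartan elements correspond bijectively to Reynolds operators.

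There is essentially no obstacle here: the theorem is a direct corollary of Theorem \ref{mc-main} together with the observation (already made in Section \ref{sec-2}) that a Reynolds operator is a $(-\mu)$-twisted Rota-Baxter operator. The only point worth spelling out is the substitution $H = -\mu$ in the formulas \eqref{tt} and \eqref{ttt}, which yields $\tfrac12 \llbracket R, R \rrbracket(a,b) - \tfrac16 \llbracket R, R, R \rrbracket(a,b) = R(aR(b) + R(a)b - R(a)R(b)) - R(a)R(b)$, vanishing exactly when $R$ is a Reynolds operator. I would also remark in passing that the $l_1$ term is absent in the statement because a Maurer-Cartan element is being sought starting from the zero map, so only $l_2$ and $l_3$ enter the Maurer-Cartan equation; this matches the display in the theorem statement.
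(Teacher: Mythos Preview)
Your proposal is correct and matches the paper's own treatment: the paper states this theorem as ``the Reynolds operator version of Theorem \ref{mc-main}'' and gives no separate proof, relying exactly on the specialization $M=A$, $H=-\mu$ that you spell out. One small clarification: your closing remark that $l_1$ is absent ``because a Maurer-Cartan element is being sought starting from the zero map'' is not quite the reason---the $L_\infty$-algebra of Theorem \ref{mc-main} simply has $l_1=0$ by construction (the derived bracket \eqref{derived-brkt} already yields a graded Lie algebra, to which the ternary bracket is adjoined), so there is no $l_1$ to appear in the Maurer-Cartan equation regardless of base point.
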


Let $R$ be a Reynolds operator on $A$. The cohomology induced from the corresponding Maurer-Cartan element is called the cohomology of $R$.

A Reynolds operator $R$ induces a new associative algebra structure on $A$ given by
\begin{align*}
a * b = R(a) b + a R(b) - R(a) R(b), ~ \text{ for } a, b \in A.
\end{align*}
Moreover, the vector space $A$ is an $(A, *)$-bimodule given by
\begin{align*}
l (a, b ) = R(a) b  + R ( R(a) b) - R ( ab ), \qquad r (b, a) = b R(a) + R (b R(a)) - R ( ba), ~ \text{ for } a \in (A, *), b \in A.
\end{align*}
Note that this is not the adjoint bimodule.
The Hochschild cohomology groups of the algebra $(A, *)$ with coefficients in the above bimodule are isomorphic to the cohomology of $R$.

Deformations of a Reynolds operator $R$ can be defined along the line of section \ref{sec-def}. Such deformations are governed by the cohomology of $R$ as of Theorem \ref{inf-1-coc} and Theorem \ref{rigid-suff}.

\section{NS-algebras}\label{sec-ns-alg}
In this section, we define cohomology of NS-algebras using non-symmetric multiplicative operads. As a consequence, we get a Gerstenhaber algebra structure on the cohomology. 
Finally, we study deformations of NS-algebras from cohomological points of view.

\subsection{A new non-symmetric operad and cohomology of NS-algebras}
Here we construct a new non-symmetric operad associated to a vector space $A$. A multiplication on this operad is equivalent to an NS-algebra structure on $A$. The cohomology induced from the multiplication is the cohomology of the given NS-algebra.

Let $C_1 = \{ 1 \}$ and $C_n$ be the set of first $n+1$ natural numbers, for $n \geq 2$. For convenience, we write $C_1 = \{ [1] \}$ and $C_n = \{ [1], \ldots, [n], [n+1] \}$, for $n \geq 2$. Given a vector space $A$, we consider the collection of vector spaces
\begin{align*}
\mathcal{O}_A(n) = \mathrm{Hom}( \mathbb{K}[C_n] \otimes A^{\otimes n}, A), ~\text{ for } n \geq 1.
\end{align*}
For any $f \in \mathcal{O}_A(m), g \in \mathcal{O}_A(n)$ and $1 \leq i \leq m$, we define $f \circ_i g \in \mathcal{O}_A(m+n-1)$ by the following\\
\begin{align}
( f \circ_i g) ([r]; a_1, \ldots, a_{m+n-1}) = \label{new-op-pc}
\end{align}
\begin{align}
\begin{cases} 
\medskip \medskip
f([r]; a_1, \ldots, a_{i-1}, g ([1]+\cdots+[n+1]; a_i, \ldots, a_{i+n-1}), a_{i+n}, \ldots, a_{m+n-1}) & \text{ if } 1 \leq r \leq i-1, \nonumber \\ \medskip \medskip
f ([i]; a_1, \ldots, a_{i-1}, g ([r-i+1]; a_i, \ldots, a_{i+n-1}), a_{i+n}, \ldots, a_{m+n-1}) & \text{ if } i \leq r \leq i+n-1,   \nonumber \\  \medskip \medskip
f([r-n+1]; a_1, \ldots, a_{i-1}, g ([1]+\cdots+[n+1]; a_i, \ldots, a_{i+n-1}), a_{i+n}, \ldots, a_{m+n-1}) & \text{ if } i+n \leq r \leq m+n-1,  \nonumber  \\ 
f ([i]; a_1, \ldots, a_{i-1}, g ([n+1]; a_i, \ldots, a_{i+n-1}), a_{i+n}, \ldots, a_{m+n-1}) & \text{ if }  r = m+n.  \nonumber \\
+ f ([m+1]; a_1, \ldots, a_{i-1}, g ([1]+ \cdots+[n+1]; a_i, \ldots, a_{i+n-1}), a_{i+n}, \ldots, a_{m+n-1}).  \nonumber \\
\end{cases}
\end{align}
Here $a_1, \ldots, a_{m+n-1} \in A.$
\medskip

\medskip

With the above notations, we have the following.

\begin{thm}\label{thm-new-operad}
For any vector space $A$, the collection of spaces $\{ \mathcal{O}_A(n)\}_{n \geq 1}$ with partial compositions 
\begin{align*}
\circ_i : \mathcal{O}_A(m ) \otimes \mathcal{O}_A(n) \rightarrow \mathcal{O}_A(m+n-1), ~\text{ for } m, n \geq 1 \text{ and } 1 \leq i \leq m
\end{align*}
is a non-symmetric operad with the identity element $\mathrm{id} \in \mathcal{O}_A(1)$ given by $\mathrm{id}([1]; a) = a$, for all $a \in A$.

Moreover, an NS-algebra structure on $A$ is equivalent to a multiplication $\pi \in \mathcal{O}_A(2)$ on the operad $\mathcal{O}_A$ (i.e., $\pi \in \mathcal{O}_A(2)$ that satisfies $\pi \circ_1 \pi = \pi \circ_2 \pi$).
\end{thm}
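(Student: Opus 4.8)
The plan is to establish the two assertions of Theorem~\ref{thm-new-operad} separately: first that $(\{\mathcal{O}_A(n)\}_{n\geq 1}, \circ_i, \mathrm{id})$ satisfies the non-symmetric operad axioms, and second that multiplications on this operad are the same data as NS-algebra structures. For the operad axioms, the unitality conditions $f \circ_i \mathrm{id} = f$ and $\mathrm{id} \circ_1 g = g$ are immediate from the definition of $\circ_i$ in \eqref{new-op-pc}, since inserting the element $\mathrm{id}([1];a) = a$ does nothing to the arguments and does not shift the labels. The substance is the two associativity axioms: the ``sequential'' one $(f \circ_i g) \circ_j h = f \circ_i (g \circ_{j-i+1} h)$ for $i \leq j \leq i+n-1$, and the ``parallel'' one $(f \circ_i g) \circ_j h = (-1)^{|g||h|}(f \circ_{j-n+1} h) \circ_i g$ for $j \geq i+n$ (in our ungraded setting the sign is $+1$). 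The only delicate point is bookkeeping the label arguments $[r] \in C_n$: one must check that the four cases in \eqref{new-op-pc} (together with the extra summand for $r = m+n$) compose correctly, i.e.\ that the ``total'' label $[1]+\cdots+[n+1]$ behaves like a formal sum that is linear in each slot, and that the splitting of the last index into $[n+1]$ plus $[m+1]$-via-total is associative. I would verify this by a careful case analysis on the position of the output label $[r]$ relative to the nested insertions; this is the main obstacle, though it is bookkeeping rather than conceptual.

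For the second assertion, I would spell out what a multiplication $\pi \in \mathcal{O}_A(2)$ amounts to concretely. Since $C_2 = \{[1],[2],[3]\}$, such a $\pi$ is determined by three bilinear maps $A^{\otimes 2} \to A$, namely $a \succ b := \pi([1]; a, b)$, $a \prec b := \pi([2]; a, b)$ and $a \curlyvee b := \pi([3]; a, b)$, and by linearity $\pi([1]+[2]+[3]; a, b) = a \succ b + a \prec b + a \curlyvee b =: a * b$. I would then compute both $\pi \circ_1 \pi$ and $\pi \circ_2 \pi$ as elements of $\mathcal{O}_A(3) = \mathrm{Hom}(\mathbb{K}[C_3] \otimes A^{\otimes 3}, A)$, evaluating each on the basis labels $[1], [2], [3], [4]$ of $C_3$ in turn, using \eqref{new-op-pc} with $m = n = 2$. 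For $\pi \circ_1 \pi$ one has $i = 1$, so the relevant cases of \eqref{new-op-pc} are $r = i = 1$ (giving $\pi([1]; \pi([r]; a,b), c)$ for the inner label contributions with $r \in \{1,2,3\}$), the shift case, and the final index $r = 4$ producing $\pi([1]; \pi([3]; a,b), c) + \pi([3]; \pi([1]+[2]+[3]; a,b), c)$; similarly for $\pi \circ_2 \pi$ with $i = 2$. Matching the two sides label by label should produce exactly the four NS-algebra axioms \eqref{ns-1}--\eqref{ns-4}: the labels $[1], [2], [3]$ of $C_3$ will yield \eqref{ns-3}, \eqref{ns-1}, \eqref{ns-2} respectively (in some order), and the composite label $[4]$ will yield \eqref{ns-4}, which is precisely the identity mixing $\curlyvee$ with $*$. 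The extra summand in the $r = m+n$ case of \eqref{new-op-pc} is exactly the mechanism by which the ``curly'' axiom \eqref{ns-4} involves $*$ on both sides rather than a single operation.

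Conversely, given an NS-algebra $(A, \prec, \succ, \curlyvee)$ I would define $\pi$ by the same three formulas and run the computation backwards to see that \eqref{ns-1}--\eqref{ns-4} force $\pi \circ_1 \pi = \pi \circ_2 \pi$; since the passage between $\pi$ and the triple $(\succ, \prec, \curlyvee)$ is manifestly a bijection, this gives the claimed equivalence. The only genuine risk is a sign or indexing slip in identifying which $C_3$-label yields which NS-axiom and in the placement of the extra summand, so I would double-check the $r=4$ computation against \eqref{ns-4} explicitly. Once Theorem~\ref{thm-new-operad} is in place, the Gerstenhaber--Voronov machinery applies verbatim to the multiplicative non-symmetric operad $(\mathcal{O}_A, \pi)$, yielding the promised cochain complex and Gerstenhaber algebra structure.
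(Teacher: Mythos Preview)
Your approach matches the paper's: verify the operad axioms by a case analysis on the output label $[r]$, then for the second assertion compute $(\pi \circ_1 \pi - \pi \circ_2 \pi)([r];a,b,c)$ for each $[r]\in C_3$ and match the results against \eqref{ns-1}--\eqref{ns-4}. The paper carries out exactly this, displaying one case of the sequential associativity in detail and then the full $4$-line table for $\pi\circ_1\pi - \pi\circ_2\pi$.

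There is, however, a concrete indexing error in your dictionary between $\pi$ and $(\prec,\succ,\curlyvee)$: you set $a \succ b := \pi([1];a,b)$ and $a \prec b := \pi([2];a,b)$, which is the reverse of what the partial composition \eqref{new-op-pc} forces. With your labeling, the $[r]=[1]$ component of the equation $\pi\circ_1\pi=\pi\circ_2\pi$ reads $(a\succ b)\succ c = a\succ(b*c)$, and the $[r]=[3]$ component reads $(a*b)\prec c = a\prec(b\prec c)$; neither of these is among \eqref{ns-1}--\eqref{ns-4}, in any order. The NS-axioms are not invariant under the swap $\prec\leftrightarrow\succ$ (only under $\prec\leftrightarrow\succ$ together with reversal of argument order), so this is not a harmless relabeling. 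The correct assignment is $a\prec b=\pi([1];a,b)$, $a\succ b=\pi([2];a,b)$, $a\curlyvee b=\pi([3];a,b)$; then the labels $[1],[2],[3],[4]$ of $C_3$ produce precisely \eqref{ns-1}, \eqref{ns-2}, \eqref{ns-3}, \eqref{ns-4} in that order. You flagged exactly this kind of slip as the main risk, and it is indeed the one place where the argument as written would not go through.
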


\begin{proof}
To prove that the collection of spaces $\{ \mathcal{O}_A(n) \}_{n \geq 1}$ with partial compositions $\circ_i$ forms a non-symmetric operad, we need to verify that
\begin{align}
( f \circ_i g) \circ_{i+j-1} h =~& f \circ_i ( g \circ_j h), ~ \text{ for } 1 \leq i \leq m,~ 1 \leq j \leq n,\\
(f \circ_i g) \circ_{j+n-1} h =~& ( f \circ_j h) \circ_i g, ~ \text{ for } 1 \leq i < j \leq m,
\end{align}
for $f \in \mathcal{O}_A(m),~ g \in \mathcal{O}_A(n),~ h \in \mathcal{O}_A(p)$, and the element $\mathrm{id}\in \mathcal{O}_A(1)$ satisfies $f \circ_i \mathrm{id} = f = f \circ_1 f$, for $f \in \mathcal{O}_A(m)$ and $1 \leq i \leq m$ \cite{gers-voro,lod-val-book}. Take $m, n , p \geq 2$ (if some of them are $1$, then the calculations are more simpler) and $1 \leq i \leq m$, $i \leq j \leq n$. For $1 \leq r \leq i-1$, we have
\begin{align*}
&((f \circ_i g) \circ_{i+j-1} h ) ([r]; a_1, \ldots, a_{m+n+p-2}) \\
&= (f \circ_i g) ( [r]; a_1, \ldots, h ([1]+ \cdots+ [p+1]; a_{i+j-1}, \ldots, a_{i+j+p-2}), a_{i+j+n-1}, \ldots , a_{m+n+p-2}) \\
&= f \big([r]; a_1, \ldots, g \big([1]+ \cdots + [n+1]; a_i, \ldots, h ([1]+\cdots +[p+1]; a_{i+j-1}, \ldots, a_{i+j+p-2}),  \ldots , a_{i+n+p-2}  \big),\\
& \qquad \qquad \qquad \qquad \qquad \qquad \qquad \qquad \qquad \qquad \qquad \qquad \qquad \qquad \qquad \qquad \qquad \ldots, a_{m+n+p-2}     \big).
\end{align*} 
On the other hand,
\begin{align}\label{otoh}
&(f \circ_i (g \circ_j h)) ([r]; a_1, \ldots, a_{m+n+p-2}) \nonumber \\
&= f ( [r]; a_1, \ldots, a_{i-1}, (g \circ_j h) ([1]+ \cdots+[n+p]; a_i, \ldots, a_{i+n+p-2}), \ldots, a_{m+n+p-2}).
\end{align}
Note that
\begin{align*}
&(g \circ_j h) ([1]+ \cdots+[n+p]; a_i, \ldots, a_{i+n+p-2}) \\
&= (g \circ_j h) ([1]+ \cdots+[j-1]; a_i, \ldots, a_{i+n+p-2}) + (g \circ_j h) ([j]+ \cdots+[j+p-1]; a_i, \ldots, a_{i+n+p-2}) \\
&+ (g \circ_j h) ([j+p]+ \cdots+[n+p-1]; a_i, \ldots, a_{i+n+p-2}) + (g \circ_j h) ([n+p]; a_i, \ldots, a_{i+n+p-2}) \\
&= g ([1]+ \cdots + [j-1]; a_i, \ldots, h ([1]+ \cdots + [p+1]; a_{i+j-1}, \ldots, a_{i+j+p-2}), \ldots, a_{i+n+p-2} ) \\
&+ g ([j]; a_i, \ldots, h ([1]+ \cdots + [p]; a_{i+j-1}, \ldots, a_{i+j+p-2} ), \ldots, a_{i+n+p-2} )\\
&+ g ([j+1]+ \cdots + [n]; a_i, \ldots, h ( [1]+\cdots+[p+1]; a_{i+j-1}, \ldots, a_{i+j+p-2}), \ldots, a_{i+n+p-2} ) \\
&+ g ([j] ; a_i, \ldots, h ([p+1]; a_{i+j-1}, \ldots, a_{i+j+p-2}), \ldots, a_{i+n+p-2} ) \\
&+ g ([n+1]; a_i, \ldots, h ([1]+\cdots+[p+1]; a_{i+j-1}, \ldots, a_{i+j+p-2}), \ldots, a_{i+n+p-2} ) \\
&= g ([1]+\cdots+[n+1]; a_i, \ldots, h ([1]+\cdots+[p+1]; a_{i+j-1}, \ldots, a_{i+j+p-2}), \ldots, a_{i+n+p-2} ). 
\end{align*}
By substituting this in (\ref{otoh}), we get that 
\begin{align*}
((f \circ_i g) \circ_{i+j-1} h )([r]; a_1, \ldots, a_{m+n+p-2})  =  (f \circ_i (g \circ_j h)) ([r]; a_1, \ldots, a_{m+n+p-2})
\end{align*}
holds, for $1 \leq r \leq i-1$. Similarly, it holds if $r$ belongs to either of the intervals $i \leq r \leq i+j-2$ / $i+j-1 \leq r \leq i+j+p-2$ / $i+j+p-1 \leq r \leq i+n+p-2$ / $i+n+p-1 \leq r \leq m+n+p-2$ / $r = m+n+p-1$. Hence we have $(f \circ_i g) \circ_{i+j-1} h  = f \circ_i (g \circ_j h)$.

In a similar way, one can show that $(f \circ_i g) \circ_{j+n-1} h = (f \circ_j h) \circ_i g$, for $1 \leq i < j \leq m.$ Finally, it is easy to observe that the element $\mathrm{id} \in \mathcal{O}_A(1)$ satisfies  $f \circ_i \mathrm{id} = f = \mathrm{id} \circ_1 f$, for $f \in \mathcal{O}_A(m)$ and $1 \leq i \leq m$. Therefore, the collection of spaces $\{ \mathcal{O}_A(n) \}_{n \geq 1}$ with the partial compositions $\circ_i$ forms a non-symmetric operad.\\

\noindent {\em Proof of the last part.}

Let $\pi \in \mathcal{O}_A(2)$ be a multiplication on the operad $\mathcal{O}_A$. Then $\pi$ is equivalent to three binary operations (say $\prec, \succ, \curlyvee$) on $A$ given by
\begin{align*}
a \prec b = \pi ([1]; a, b ), \quad a \succ b = \pi ([2]; a, b ) ~~~ \text{ and } ~~~ a \curlyvee b = \pi ([3]; a, b), ~ \text{ for } a, b \in A.
\end{align*} 

Note that
\begin{align*}
(\pi \circ_1 \pi - \pi \circ_2 \pi) ([r]; a, b, c) 
= \begin{cases} 
(a \prec b) \prec c - a \prec (b * c) & \text{ if } [r] = [1], \\
(a \succ b) \prec c - a \succ (b \prec c)  & \text{ if } [r] = [2], \\
( a * b ) \succ c - a \succ ( b \succ c)  & \text{ if } [r] = [3], \\
( a \curlyvee b) \prec c + ( a * b ) \curlyvee c - a \succ (b \curlyvee c) - a \curlyvee ( b * c) & \text{ if } [r] = [4]. \\
\end{cases}
\end{align*}
This shows that $\pi$ is a multiplication on the operad $\mathcal{O}_A$ (i.e., $\pi \circ_1 \pi = \pi \circ_2 \pi$) if and only if the corresponding binary operations $(\prec, \succ, \curlyvee)$ defines an NS-algebra structure on $A$.
\end{proof}

\begin{remark}
Consider a collection of subspaces $\mathcal{O}_A'(n ) \subset \mathcal{O}_A(n)$, for $n \geq 1$ as follows. Take $\mathcal{O}_A'(1) = \mathcal{O}_A(1)$ and for $n \geq 2$,
\begin{align*}
\mathcal{O}_A'(n) = \{ f \in \mathcal{O}_A(n) |~ f_{[n+1]} = 0 \}.
\end{align*}
Then the collection of spaces $\{\mathcal{O}_A'(n)\}_{n \geq 1}$ with the same partial compositions forms a suboperad of the operad $\mathcal{O}_A$. This suboperad is the operad in which multiplication is equivalent to a dendriform structure on $A$. This operad is explicitly treated in \cite{das-dend}.
\end{remark}

Let $(A, \prec, \succ, \curlyvee)$ be an NS-algebra. Then it follows from the previous theorem that the NS-algebra structure on $A$ is equivalent to a multiplication $\pi \in \mathcal{O}_A(2) = \mathrm{Hom}( \mathbb{K}[C_2] \otimes A^{\otimes 2}, A)$ in the operad $\mathcal{O}_A$. Hence $\pi$ induces a differential $\delta_\pi : \mathcal{O}_A(n) \rightarrow \mathcal{O}_A(n+1)$ as follows
\begin{align*}
\delta_\pi (f) := (-1)^{n-1} \big[ \sum_{i =1}^2 (-1)^{(i-1)(n-1)}~ \pi \circ_i f -(-1)^{n-1} \sum_{i=1}^n (-1)^{(i-1)}~ f \circ_i \pi \big].
\end{align*}
The cohomology of the cochain complex $\{ \mathrm{Hom}(\mathbb{K}[C_\bullet]\otimes A^{\otimes \bullet}, A), \delta_\pi \}$ is called the cohomology of the NS-algebra $A$ with coefficients in itself and denoted by $H^\bullet_{\mathrm{NS}}(A, A)$. Since the cohomology is induced from an operad with multiplication, the graded space of cohomology $H^\bullet_{\mathrm{NS}} (A, A)$ inherits a Gerstenhaber structure. See \cite{gers-voro,das-loday} for more details about the Gerstenhaber structure induced from a non-symmetric operad with multiplication.

\begin{remark}
Note that representations of NS-algebras and cohomology with coefficients in a representation can be easily defined without much hard work. See \cite{das-dend} for the case of dendriform algebras.
\end{remark}

\subsection{Relation with the Hochschild cohomology}

We have seen that the sum of three operations in an NS-algebra $A$ is associative. Therefore, we have two cohomologies, namely, the cohomology of the NS-algebra and the Hochschild cohomology of the induced associative algebra structure on $A$. We relate these two cohomologies using the following result.

\begin{thm}\label{operad-split}
Let $A$ be a vector space. Then the collection $\{ \Theta_n \}_{n \geq 1}$ of maps 
\begin{align*}
\Theta_n : \mathrm{Hom}(\mathbb{K}[C_n] \otimes A^{\otimes n}, A) \rightarrow \mathrm{Hom}(A^{\otimes n}, A), ~~\text{ for } n \geq 1
\end{align*}
defined by
\begin{align*}
\Theta_n (f) = \begin{cases} f_{[1]} & \text{ if } n =1 \\
f_{[1]} + \cdots + f_{[n]} + f_{[n+1]} & \text{ if } n \geq 2 \end{cases}
\end{align*}
is a morphism of non-symmetric operads. Here $\{ \mathrm{Hom}(\mathbb{K}[C_n] \otimes A^{\otimes n}, A) \}_{n \geq 1}$ is equipped with the operad structure given in Theorem \ref{thm-new-operad} and $\{ \mathrm{Hom}(A^{\otimes n}, A) \}_{n \geq 1}$ is equipped with the standard endomorphism operad structure.
\end{thm}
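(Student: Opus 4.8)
The plan is to verify directly that $\Theta_\bullet$ commutes with the partial composition operations and preserves identities. Since both sides are non-symmetric operads, it suffices to check three things: that $\Theta_1(\mathrm{id})$ is the identity of the endomorphism operad (immediate, as $\Theta_1(\mathrm{id})(a) = \mathrm{id}([1];a) = a$), and that for all $m,n \geq 1$, all $1 \leq i \leq m$, $f \in \mathcal{O}_A(m)$, $g \in \mathcal{O}_A(n)$, we have
\begin{align*}
\Theta_{m+n-1}(f \circ_i g) = \Theta_m(f) \circ_i \Theta_n(g),
\end{align*}
where the right-hand side is ordinary Gerstenhaber-style partial composition of multilinear maps. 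Once this is established, compatibility with the operad axioms is automatic. I would treat the degenerate cases $m=1$ or $n=1$ first (these are easy: $\Theta_1 = \mathrm{id}$ on $\mathcal{O}_A(1)$, and $f \circ_i g$ with $n = 1$ just plugs $g_{[1]}$ into the $i$-th slot of each component of $f$), and then concentrate on $m, n \geq 2$.

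For the main case, I would expand $\Theta_{m+n-1}(f \circ_i g)$ using the definition: by construction $\Theta_{m+n-1}(f \circ_i g) = \sum_{r=1}^{m+n} (f \circ_i g)_{[r]}$ evaluated on $a_1, \ldots, a_{m+n-1}$. Now use the four-case formula (\ref{new-op-pc}) for $(f \circ_i g)([r]; -)$ and split the sum over $r$ according to the ranges $1 \leq r \leq i-1$, $i \leq r \leq i+n-1$, $i+n \leq r \leq m+n-1$, and $r = m+n$. In the first and third ranges each term contributes $f([r']; a_1, \ldots, g([1]+\cdots+[n+1]; a_i, \ldots), \ldots)$ where $g([1]+\cdots+[n+1]; -) = \Theta_n(g)(a_i, \ldots, a_{i+n-1})$, and the indices $[r']$ (namely $[r]$ for $r < i$ and $[r-n+1]$ for $r > i+n-1$) run over all components of $f$ except $[i]$ and $[m+1]$; in the middle range $i \leq r \leq i+n-1$ the terms are $f([i]; \ldots, g([r-i+1]; \ldots), \ldots)$ with $g$-components $[1], \ldots, [n]$, and the $r = m+n$ case contributes both an $f([i]; \ldots, g([n+1]; \ldots), \ldots)$ term and an $f([m+1]; \ldots, g([1]+\cdots+[n+1]; \ldots), \ldots)$ term. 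Collecting: the $[m+1]$-piece of $f$ is supplied exactly once (from $r = m+n$), fed with $\Theta_n(g)$; the $[i]$-piece of $f$ is supplied with $g([1]; -) + \cdots + g([n]; -) + g([n+1]; -) = \Theta_n(g)$ (assembled from the middle range together with the first term of the $r = m+n$ case); and every other component $[j]$ of $f$ is supplied once with $\Theta_n(g)$. Hence the total is $\bigl(\sum_j f_{[j]}\bigr)$ with $\Theta_n(g)$ inserted in the $i$-th argument, which is precisely $\Theta_m(f) \circ_i \Theta_n(g)$.

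The main obstacle is purely bookkeeping: one must keep careful track of the index shifts ($[r]$ versus $[r-n+1]$ versus $[r-i+1]$) and confirm that the $[i]$-component of $f$ receives \emph{all} of $g$'s components (this is exactly why the split into a "localized" contribution plus the global $[m+1]$-correction term in (\ref{new-op-pc}) is arranged as it is), so that the inner insertion collapses to $\Theta_n(g)$ rather than to some partial sum. Once the case $m,n \geq 2$ is done, the remaining cases and the identity axiom are routine, and the two consequences stated after the theorem — a map $H^\bullet_{\mathrm{NS}}(A,A) \to H^\bullet_{\mathrm{Hoch}}(A,A)$ and compatibility with the multiplications $\pi$ and $\Theta_2(\pi) = *$ — follow formally from $\Theta_\bullet$ being an operad morphism.
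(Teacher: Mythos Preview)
Your proposal is correct and follows essentially the same approach as the paper's proof: both reduce to verifying $\Theta_{m+n-1}(f \circ_i g) = \Theta_m(f) \circ_i \Theta_n(g)$ (plus the identity axiom), treat the cases $m=1$ or $n=1$ as easy, and for $m,n\geq 2$ expand $\sum_{r=1}^{m+n}(f\circ_i g)_{[r]}$, split into the four ranges $1\leq r\leq i-1$, $i\leq r\leq i+n-1$, $i+n\leq r\leq m+n-1$, $r=m+n$ dictated by (\ref{new-op-pc}), and regroup to obtain $\bigl(\sum_j f_{[j]}\bigr)\circ_i\bigl(\sum_k g_{[k]}\bigr)$. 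Your account of how the $[i]$-component of $f$ collects all of $g$'s components and how the $[m+1]$-component is supplied by the $r=m+n$ case is exactly the bookkeeping the paper carries out.
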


\begin{proof}
To prove that the collection $\{ \Theta_n \}_{n \geq 1}$ of maps is a morphism of non-symmetric operads, we need to show that 
\begin{align*}
\Theta_{m+n-1} ( f \circ_i g) = \Theta_m (f) \circ_i \Theta_n (g) \qquad \text{ and } \qquad \Theta_1 (\mathrm{id}) = \mathrm{id}_A,
\end{align*}
for $f \in \mathrm{Hom}( \mathbb{K}[C_m] \otimes A^{\otimes m }, A),~ g \in \mathrm{Hom}( \mathbb{K}[C_n] \otimes A^{\otimes n }, A)$ and $1 \leq i \leq m$. Here on the right hand side, we use partial compositions of endomorphism operad $\mathrm{End}_A$ (however we denote by the same notation as $\circ_i$). First we take $m,n \geq 2$. However the case when one (or both) of $m,n$ is $1$ involves more simple calculation. Observe that
\begin{align*}
&\Theta_{m+n-1} ( f \circ_i g) \\
&= ( f \circ_i g)_{[1]} + \cdots + ( f \circ_i g)_{[m+n-1]} + ( f \circ_i g)_{[m+n]} \\
&= \sum_{r=1}^{i-1} ( f \circ_i g)_{[r]} + \sum_{r=i}^{i+n-1} (f \circ_i g)_{[r]} + \sum_{r=i+n}^{m+n-1} (f \circ_i g)_{[r]} + (f \circ_i g)_{[m+n]} \\
&= (f_{[1]} + \cdots + f_{[i-1]}) \circ_i (g_{[1]} + \cdots + g_{[n+1]})  + f_{[i]} \circ_i (g_{[1]} + \cdots + g_{[n]}) \\
~&+ (f_{[i+1]} + \cdots + f_{[m]}) \circ_i (g_{[1]} + \cdots + g_{[n+1]})  + f_{[i]} \circ_i g_{[n+1]} + f_{[m+1]} \circ_i(g_{[1]} + \cdots + g_{[n+1]}) \quad  (\text{by } (\ref{new-op-pc}))\\
&= (f_{[1]} + \cdots + f_{[m+1]} )  \circ_i  (g_{[1]} + \cdots + g_{[n+1]})  = \Theta_m (f) \circ_i \Theta_n (g).
\end{align*}
Finally, it follows from the definition of $\mathrm{id} \in \mathrm{Hom}( \mathbb{K}[C_1] \otimes A,A)$ that $\Theta_1 (\mathrm{id} ) = \mathrm{id}_A$. Hence the proof.
\end{proof}

Let $(A, \prec, \succ, \curlyvee)$ be an NS-algebra with the corresponding associative product $* = \prec + \succ + \curlyvee$. If $\pi \in \mathrm{Hom}(\mathbb{K}[C_2] \otimes A^{\otimes 2}, A)$ denotes the multiplication on the operad $\mathcal{O}_A$ defining the NS-algebra structure on $A$, then we have
\begin{align*}
\Theta_2 (\pi) = *.
\end{align*}
This shows that the multiplication in the operad $\mathcal{O}_A$ for the NS-algebra structure on $A$ corresponds the (associative) multiplication in the endomorphism operad via $\Theta_2$. Hence by the consequence of Theorem \ref{operad-split}, the collection of maps $\{ \Theta_n \}_{n \geq 1}$ induces a morphism $\Theta_\bullet : H^\bullet_{\mathrm{NS}}(A, A) \rightarrow H^\bullet_{\mathrm{Hoch}}(A, A)$ between corresponding cohomologies. This is in fact a morphism of Gerstenhaber algebras.


\subsection{Deformations of NS-algebras}
In this subsection, we study deformations of NS-algebras. Since our approaches and results are similar to the classical case of Gerstenhaber \cite{gers}, we only state some of the results without proof.

Let $A = (A, \prec, \succ, \curlyvee)$ be an NS-algebra. Consider the space $A[[t]]$ of formal power series in $t$ with coefficients from $A$. Then $A[[t]]$ is a $\mathbb{K}[[t]]$-module.
\begin{defn}
A formal one-parameter deformation of $A$ consists of formal sums 
\begin{align*}
\prec_t = \sum_{i \geq 0} t^i \prec_i, \quad \succ_t = \sum_{i \geq 0} t^i \succ_i \text{ and } \curlyvee_t = \sum_{ i \geq 0} t^i \curlyvee_i (\text{with }\prec_0 = \prec,~\succ_0 = \succ \text{ and } \curlyvee_0 = \curlyvee)
\end{align*}
of bilinear maps on $A$ such that $(A[[t]] , \prec_t , \succ_t, \curlyvee_t)$ is an NS-algebra over the base $\mathbb{K}[[t]]$.
\end{defn}

Let $T : M \rightarrow A$ be an $H$-twisted Rota-Baxter operator. Suppose $T_t$ is a formal deformation of $T$. Then $T_t$ induces a formal deformation of the induced NS-algebra structure on $M$ given by
\begin{align*}
u \prec_t v =  \sum_{i \geq 0} u \cdot T_i (v)  \qquad   u \succ_t v =  \sum_{i \geq 0} T_i (u) \cdot  v   \qquad u \curlyvee_t v =  \sum_{ i \geq 0} t^i (\sum_{j+k =i} H(T_j(u), T_k(v))).
\end{align*}

Note that, in a formal deformation $(\prec_t, \succ_t, \curlyvee_t)$ of an NS-algebra, the following system of equations must hold: for $n \geq 0$,
\begin{align}
\sum_{i+j = n} (a \prec_i b) \prec_j c =~& \sum_{i+j = n} a \prec_i (b *_j c), \label{ns-def-1}\\
\sum_{i+j = n} (a \succ_i b ) \prec_j c =~& \sum_{i+j = n} a \succ_i (b \prec_j c), \label{ns-def-2}\\
\sum_{i+j = n} (a *_i b ) \succ_j c =~& \sum_{i+j = n} a \succ_i ( b \succ_j c), \label{ns-def-3}\\
\sum_{i+j = n} (a \curlyvee_i b ) \prec_j c + (a *_i b ) \curlyvee_j c =~& \sum_{i+j = n} a \succ_i (b \curlyvee_j c) + a \curlyvee_i (b *_j c), \label{ns-def-4}
\end{align}
for $a, b, c \in A$. For simplicity, we define an element $\pi_i \in \mathrm{Hom}(\mathrm{K}[C_2] \otimes A^{\otimes 2}, A)$, for each $i \geq 0$, by
\begin{align}\label{pi-i}
\pi_i ([1]; a, b) = a \prec_i b, \qquad \pi_i ([2]; a, b) = a \succ_i b ~~~ \text{ and } ~~~ \pi_i ([3]; a, b ) = a \curlyvee_i b.
\end{align}
Then we have $\pi_0 = \pi$. With the above notations, the identities (\ref{ns-def-1}) - (\ref{ns-def-4}) can be simply written as
\begin{align}
\sum_{i+j = n } \pi_i \circ_1 \pi_j = \sum_{i+j = n} \pi_i \circ_2 \pi_j.
\end{align} 
For $n = 1$, it follows that $\pi \circ_1 \pi_1 - \pi \circ_2 \pi_1  + \pi_1 \circ_1 \pi - \pi_1 \circ_2 \pi = 0$, or equivalently, $\delta_\pi (\pi_1) = 0$. This shows that $\pi_1 = (\prec_1, \succ_1, \curlyvee_1)$ is a $2$-cocycle in the cohomology of the NS-algebra $A$. This $2$-cocycle is called the infinitesimal of the deformation $(\prec_t, \succ_t, \curlyvee_t)$.

\begin{defn}
Two deformations  $(\prec_t, \succ_t, \curlyvee_t)$ and  $(\prec_t', \succ_t', \curlyvee_t')$ of an NS-algebra $A$ are said to be equivalent if there exists a formal isomorphism $\Phi_t = \sum_{i \geq 0} t^i \Phi_i : A[[t]] \rightarrow A[[t]]$ with $\Phi_0 = \mathrm{id}_A$ such that
\begin{align*}
\Phi_t : (A[[t]], \prec_t, \succ_t, \curlyvee_t) \rightarrow (A[[t]], \prec_t', \succ_t', \curlyvee_t')
\end{align*}
is a morphism of NS-algebras.
\end{defn}

The condition in the above definition is equivalent to
\begin{align*}
\sum_{i+j = n} \Phi_i \circ \pi_j = \sum_{i+j+k = n} \pi_i' \circ (\Phi_j \otimes \Phi_k),~ \text{ for } n \geq 0.
\end{align*}
For $n = 1$, we get
\begin{align*}
\pi_1 - \pi_1' = \pi \circ ( \Phi_1 \otimes \mathrm{id}) + \pi ( \mathrm{id} \otimes \Phi_1) - \Phi_1 \circ \pi = \delta_\pi (\Phi_1).
\end{align*}
Thus, we obtain the following.

\begin{thm}
Let $(\prec_t, \succ_t , \curlyvee_t)$ be a formal deformation of an NS-algebra $A$. Then the linear term $\pi_1 = (\prec_1, \succ_1, \curlyvee_1)$ is a $2$-cocycle in the cohomology of $A$ whose cohomology class depends only on the equivalence class of the deformation $(\prec_t, \succ_t, \curlyvee_t)$.
\end{thm}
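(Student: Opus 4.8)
The plan is to extract the degree-one part of the two systems of equations that already appear in the excerpt: the deformation equations $\sum_{i+j=n}\pi_i\circ_1\pi_j=\sum_{i+j=n}\pi_i\circ_2\pi_j$ and the equivalence condition $\sum_{i+j=n}\Phi_i\circ\pi_j=\sum_{i+j+k=n}\pi_i'\circ(\Phi_j\otimes\Phi_k)$. For a formal deformation $(\prec_t,\succ_t,\curlyvee_t)$ with associated elements $\pi_i\in\mathrm{Hom}(\mathbb{K}[C_2]\otimes A^{\otimes 2},A)$ as in \eqref{pi-i}, the case $n=1$ of the deformation equations reads $\pi\circ_1\pi_1-\pi\circ_2\pi_1+\pi_1\circ_1\pi-\pi_1\circ_2\pi=0$, which is exactly $\delta_\pi(\pi_1)=0$ by the definition of $\delta_\pi$ (with $n=2$ the signs work out to this four-term expression). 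Hence $\pi_1$ is a $2$-cocycle in the cochain complex $\{\mathrm{Hom}(\mathbb{K}[C_\bullet]\otimes A^{\otimes\bullet},A),\delta_\pi\}$ computing $H^\bullet_{\mathrm{NS}}(A,A)$, and so determines a class $[\pi_1]\in H^2_{\mathrm{NS}}(A,A)$.

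For the well-definedness of this class, I would take two equivalent deformations $(\prec_t,\succ_t,\curlyvee_t)$ and $(\prec_t',\succ_t',\curlyvee_t')$ with a formal isomorphism $\Phi_t=\sum_{i\geq 0}t^i\Phi_i$, $\Phi_0=\mathrm{id}_A$, realizing the equivalence. Extracting the coefficient of $t^1$ from $\sum_{i+j=n}\Phi_i\circ\pi_j=\sum_{i+j+k=n}\pi_i'\circ(\Phi_j\otimes\Phi_k)$ (using $\Phi_0=\mathrm{id}_A$, $\pi_0=\pi$, $\pi_0'=\pi'$, and the fact that the underlying NS-algebra structures $\pi$ and $\pi'$ agree since both deformations start at $A$) gives $\pi_1+\pi\circ(\mathrm{id}\otimes\mathrm{id})\circ(\Phi_1\text{-terms})=\pi_1'+\cdots$. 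More precisely it yields $\pi_1-\pi_1'=\pi\circ_1\Phi_1+\pi\circ_2\Phi_1-\Phi_1\circ\pi=\delta_\pi(\Phi_1)$, where $\Phi_1\in\mathrm{Hom}(A,A)=\mathcal{O}_A(1)$ is viewed as a degree-one cochain and $\delta_\pi$ is applied to it (the sign on the $\sum f\circ_i\pi$ part being $+$ for $n=1$). Therefore $\pi_1$ and $\pi_1'$ differ by a coboundary, so $[\pi_1]=[\pi_1']$ in $H^2_{\mathrm{NS}}(A,A)$, proving that the cohomology class depends only on the equivalence class of the deformation.

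The routine obstacle is purely bookkeeping: one must verify that the operadic $\delta_\pi$ applied to a $1$-cochain $\Phi_1$ reproduces exactly the combination $\pi\circ_1\Phi_1+\pi\circ_2\Phi_1-\Phi_1\circ_1\pi-\Phi_1\circ_2\pi$ read off from the equivalence condition, and that in degree $2$ the signs in $\delta_\pi(\pi_1)$ match the four-term expression from the $n=1$ deformation equation. Since both of these are instances of the general Gerstenhaber–Voronov formula for a multiplicative non-symmetric operad (with $\pi\circ_1\pi=\pi\circ_2\pi$ guaranteeing $\delta_\pi^2=0$), the signs are forced, and the verification is the standard one already carried out in the classical associative case. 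Because the excerpt explicitly states these identities are "similar to the classical case of Gerstenhaber" and the authors only state the results without proof, I would simply record the two $n=1$ computations above and remark that they are immediate from the definition of $\delta_\pi$ and the equivalence relation, citing \cite{gers-voro} for the operadic formalism.
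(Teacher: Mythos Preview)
Your proposal is correct and follows exactly the paper's approach: the paper itself derives $\delta_\pi(\pi_1)=0$ from the $n=1$ deformation equation and $\pi_1-\pi_1'=\delta_\pi(\Phi_1)$ from the $n=1$ equivalence condition in the discussion immediately preceding the theorem, then states the result without further proof. One small slip: in your final paragraph you write $-\Phi_1\circ_1\pi-\Phi_1\circ_2\pi$, but since $\Phi_1\in\mathcal{O}_A(1)$ only $\Phi_1\circ_1\pi$ exists, consistent with your own earlier (correct) line $\pi_1-\pi_1'=\pi\circ_1\Phi_1+\pi\circ_2\Phi_1-\Phi_1\circ\pi$.
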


\begin{defn}
An NS-algebra $A$ is said to be rigid if any deformation $(\prec_t, \succ_t, \curlyvee_t)$ is equivalent to the undeformed one $(\prec_t' = \prec, \succ_t' = \succ, \curlyvee_t' = \curlyvee).$
\end{defn}

In similar to the classical case of Gerstenhaber \cite{gers} (see also \cite{das-loday} in terms of non-symmetric operad with multiplication), we get the following.

\begin{thm}
If $H^2_{\mathrm{NS}}(A, A) = 0$ then the NS-algebra $A$ is rigid.
\end{thm}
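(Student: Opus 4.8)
The plan is to follow the classical rigidity argument of Gerstenhaber, adapted to the multiplicative operad $\mathcal{O}_A$. Let $(\prec_t, \succ_t, \curlyvee_t)$ be any formal deformation of $A$, encoded via the elements $\pi_i$ of \eqref{pi-i} as $\pi_t = \sum_{i \geq 0} t^i \pi_i$ with $\pi_0 = \pi$. By the previous theorem the infinitesimal $\pi_1$ is a $2$-cocycle, so the hypothesis $H^2_{\mathrm{NS}}(A,A) = 0$ yields a linear map $\Phi_1 \in \mathrm{Hom}(A, A)$ with $\pi_1 = \delta_\pi(\Phi_1)$. First I would set $\Phi_t = \mathrm{id}_A + t \Phi_1$ (invertible over $\mathbb{K}[[t]]$ since $\Phi_0 = \mathrm{id}_A$) and replace the given deformation by the equivalent one obtained by transporting $\pi_t$ along $\Phi_t$. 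Reading off the coefficient of $t$ in the equivalence relation $\sum_{i+j=n}\Phi_i \circ \pi_j = \sum_{i+j+k=n}\pi_i' \circ (\Phi_j \otimes \Phi_k)$ gives $\pi_1' = \pi_1 - \delta_\pi(\Phi_1) = 0$, so the new deformation starts in degree $t^2$.

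Next I would run an induction on the order of the leading nontrivial term. Suppose, after finitely many such modifications, we have an equivalent deformation with $\pi_1 = \cdots = \pi_{n-1} = 0$ and leading term $\pi_n$ in degree $n \geq 2$. Extracting the coefficient of $t^n$ in the deformation equation $\sum_{i+j=n} \pi_i \circ_1 \pi_j = \sum_{i+j=n} \pi_i \circ_2 \pi_j$ and using that all intermediate $\pi_i$ vanish, every mixed term drops out and only $\pi \circ_1 \pi_n + \pi_n \circ_1 \pi = \pi \circ_2 \pi_n + \pi_n \circ_2 \pi$ survives; by the formula for $\delta_\pi$ this says exactly $\delta_\pi(\pi_n) = 0$. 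Hence $\pi_n$ is a $2$-cocycle, so by hypothesis $\pi_n = \delta_\pi(\Phi_n)$ for some $\Phi_n \in \mathrm{Hom}(A, A)$, and transporting along $\mathrm{id}_A + t^n \Phi_n$ produces an equivalent deformation all of whose terms in degrees $\leq n$ vanish.

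Finally I would assemble the successive equivalences into a single formal isomorphism. Since the $n$-th step only alters coefficients in degrees $\geq n$, the composition of the maps $\mathrm{id}_A + t^n \Phi_n$ (in increasing order of $n$, starting from $\mathrm{id}_A + t\Phi_1$) is a well-defined formal power series $\Phi_t$ with $\Phi_0 = \mathrm{id}_A$, because each of its coefficients stabilizes after finitely many factors. By construction $\Phi_t$ carries $(\prec_t, \succ_t, \curlyvee_t)$ to the undeformed NS-algebra structure, which proves rigidity. The only genuine computation is the inductive claim that the degree-$n$ part of the deformation equation collapses to $\delta_\pi(\pi_n) = 0$ once the lower terms are trivial; this is the step I expect to be the main (though routine) obstacle, and it follows directly from the explicit formula for $\delta_\pi$ together with the multiplication identity $\pi \circ_1 \pi = \pi \circ_2 \pi$ of Theorem \ref{thm-new-operad}.
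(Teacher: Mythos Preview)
Your proposal is correct and is precisely the classical Gerstenhaber rigidity argument carried out in the multiplicative operad $\mathcal{O}_A$; the paper itself does not give a proof but simply refers to \cite{gers} and \cite{das-loday} for exactly this argument, so your approach coincides with the intended one. The only point worth making explicit is that transporting $\pi_t$ along $\Phi_t = \mathrm{id}_A + t^n\Phi_n$ really does produce another NS-deformation: conjugation $f \mapsto \Phi_t \circ f \circ (\mathrm{id}_{\mathbb{K}[C_\bullet]}\otimes (\Phi_t^{-1})^{\otimes \bullet})$ is an automorphism of the operad $\mathcal{O}_A[[t]]$ (this is immediate from the shape of the partial compositions \eqref{new-op-pc}), so it carries the multiplication $\pi_t$ to another multiplication.
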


One may also study finite order deformations of an NS-algebra and their higher order extensions. Let $A$ be an NS-algebra. An order $N$ deformation of $A$ consists of finite sums
\begin{align*}
\prec_t = \sum_{i = 0}^N t^i \prec_i \qquad \succ_t = \sum_{i = 0} t^i \succ_i ~~~ \text{ and } \curlyvee_t =  \sum_{i=0}^N t^i \curlyvee_i
\end{align*}
such that the space $A[[t]]/(t^{N+1})$ with bilinear operations $(\prec_t, \succ_t, \curlyvee_t)$ is an NS-algebra over the base $\mathbb{K}[[t]]/(t^{N+1}).$

\medskip

Thus, with the notations of (\ref{pi-i}), the following equations are hold
\begin{align}\label{fini-ord-eq}
\sum_{i+j = n} \pi_i \circ_1 \pi_j = \sum_{i+j=n} \pi_i \circ_2 \pi_j, ~ \text{ for } n=0, 1, \ldots, N.
\end{align}

A deformation $(\prec_t, \succ_t, \curlyvee_t)$ of order $N$ is said to be extensible if there exists a triple $(\prec_{N+1}, \succ_{N+1}, \curlyvee_{N+1})$ of bilinear operations on $A$ such that
$( \overline{\prec}_t = \prec_t + t^{N+1} \prec_{N+1}, \overline{\succ}_t = \succ_t + t^{N+1} \succ_{N+1}, \overline{\curlyvee}_t = \curlyvee_t + t^{N+1} \curlyvee_{N+1})$ is a deformation of order $N+1$.

In such a case, we define an element $\pi_{N+1} \in \mathrm{Hom}(\mathbb{K}[C_2] \otimes A^{\otimes 2}, A)$ by $$\pi_{N+1} ([1]; a, b) = a \prec_{N+1} b,~~\pi_{N+1} ([2]; a, b) = a \succ_{N+1} b ~~~\text{ and } ~~~ \pi_{N+1} ([3]; a, b) = a \curlyvee_{N+1} b.$$ Thus, in an extensible deformation, together with equations (\ref{fini-ord-eq}), one more equation needs to be satisfied, namely,
\begin{align*}
\sum_{i+j =N+1} \pi_i \circ_1 \pi_j = \sum_{i+j =N+1} \pi_i \circ_2 \pi_j, \quad \text{equavalently},
\end{align*}
\begin{align}\label{ord-n-1}
\pi \circ_1 \pi_{N+1} - \pi \circ_2 \pi_{N+1} +  \pi_{N+1} \circ_1 \pi - \pi_{N+1} \circ_2 \pi = - \sum_{i+j =N+1, i, j \geq 1} \big( \pi_i \circ_1 \pi_j - \pi_i \circ_2 \pi_j \big).
\end{align}
Note that the right-hand side of (\ref{ord-n-1}) is a $3$-cochain in the cohomology complex of the NS-algebra $A$ that depends only on the deformation $(\prec_t, \succ_t, \curlyvee_t)$. We denote this by $\mathrm{Ob}_{(\prec_t, \succ_t, \curlyvee_t)}$. Then similar to the classical theories, one can show that $\mathrm{Ob}_{(\prec_t, \succ_t, \curlyvee_t)}$ is a $3$-cocycle in the cohomology complex of $A$, hence, defines a cohomology class $[\mathrm{Ob}_{(\prec_t, \succ_t, \curlyvee_t)}] \in H^3_{\mathrm{NS}} (A, A).$
Thus from (\ref{ord-n-1}), we get the following.

\begin{thm}
A deformation $(\prec_t, \succ_t, \curlyvee_t)$ of order $N$ is extensible if and only if the obstruction class $[\mathrm{Ob}_{(\prec_t, \succ_t, \curlyvee_t)}] \in H^3_{\mathrm{NS}} (A, A)$ is trivial.
\end{thm}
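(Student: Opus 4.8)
The plan is to recast the problem in terms of the graded Lie bracket on the multiplicative non-symmetric operad $\mathcal{O}_A$ and then run the classical obstruction argument for Maurer--Cartan elements. Following Gerstenhaber--Voronov, the partial compositions $\circ_i$ on $\bigoplus_{n\geq 1}\mathcal{O}_A(n)$ assemble into a graded pre-Lie product $f\bar\circ g=\sum_{i}(-1)^{(i-1)|g|}\,f\circ_i g$, whose graded commutator $[f,g]=f\bar\circ g-(-1)^{|f|\,|g|}\,g\bar\circ f$ is a graded Lie bracket; in this grading a binary operation has degree $1$, so for $i,j\geq 0$ the bracket $[\pi_i,\pi_j]$ is graded symmetric and $\pi_i\bar\circ\pi_j=\pi_i\circ_1\pi_j-\pi_i\circ_2\pi_j$. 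In these terms, the fact that $\pi=\pi_0$ is a multiplication reads $[\pi,\pi]=0$, the differential satisfies $\delta_\pi=\pm[\pi,-]$ (with the sign fixed by its definition above), and the order-$N$ deformation equations \eqref{fini-ord-eq} become $\sum_{i+j=n}[\pi_i,\pi_j]=0$ for $0\leq n\leq N$.

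First I would rewrite the content of \eqref{ord-n-1}: extending $(\prec_t,\succ_t,\curlyvee_t)$ to order $N+1$ is exactly the existence of $\pi_{N+1}\in\mathrm{Hom}(\mathbb{K}[C_2]\otimes A^{\otimes 2},A)$ with $[\pi,\pi_{N+1}]$ equal, up to sign, to $\mathrm{Ob}_{(\prec_t,\succ_t,\curlyvee_t)}=\tfrac12\sum_{i+j=N+1,\,i,j\geq 1}[\pi_i,\pi_j]$, i.e. $\delta_\pi(\pi_{N+1})=-\mathrm{Ob}_{(\prec_t,\succ_t,\curlyvee_t)}$; note that this cochain is manifestly determined by the given order-$N$ deformation. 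Granting that $\mathrm{Ob}_{(\prec_t,\succ_t,\curlyvee_t)}$ is a $3$-cocycle, we then have: the deformation is extensible iff $\mathrm{Ob}_{(\prec_t,\succ_t,\curlyvee_t)}$ lies in the image of $\delta_\pi$ iff its class in $H^3_{\mathrm{NS}}(A,A)$ vanishes. For the forward implication one reads off a coboundary witness from a chosen $\pi_{N+1}$; for the converse one takes $\pi_{N+1}$ to be (minus) any primitive of $\mathrm{Ob}_{(\prec_t,\succ_t,\curlyvee_t)}$, and the lower equations \eqref{fini-ord-eq} hold by hypothesis.

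The crux --- and the step I expect to require the most care --- is the cocycle property $\delta_\pi(\mathrm{Ob}_{(\prec_t,\succ_t,\curlyvee_t)})=0$. I would compute $[\pi,\sum_{i+j=N+1,\,i,j\geq 1}[\pi_i,\pi_j]]$, use the graded Jacobi identity to rewrite each summand as $[[\pi,\pi_i],\pi_j]\pm[\pi_i,[\pi,\pi_j]]$, and then substitute the lower-order equations $[\pi,\pi_i]=-\tfrac12\sum_{k+l=i,\,k,l\geq 1}[\pi_k,\pi_l]$, which are available because $i+j=N+1$ with $i,j\geq 1$ forces $1\leq i,j\leq N$. A second pass with the graded Jacobi identity and the graded symmetry of the bracket on degree-$1$ elements then makes the resulting sum of triple brackets cancel in pairs, exactly as in Gerstenhaber's classical obstruction computation and its operadic formulation in \cite{das-loday}; since $\mathrm{End}_A$ and $\mathcal{O}_A$ share the same formal structure (an operad with multiplication), the argument transfers verbatim. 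Putting the cocycle lemma together with the reformulation of \eqref{ord-n-1} yields the asserted equivalence.
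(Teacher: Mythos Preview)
Your proposal is correct and follows exactly the approach the paper has in mind: the paper itself gives no detailed argument, merely stating that ``similar to the classical theories'' the obstruction is a $3$-cocycle and that the theorem then follows from \eqref{ord-n-1}, with a reference to \cite{das-loday} for the operadic version of Gerstenhaber's computation. Your write-up simply makes explicit the graded Lie bracket on $\mathcal{O}_A$, the Maurer--Cartan reformulation, and the Jacobi-identity cancellation that the paper leaves implicit.
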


\begin{corollary}
If $H^3_{\mathrm{NS}} (A, A) = 0$ then any finite order deformation of $A$ extends to deformation of next order.
\end{corollary}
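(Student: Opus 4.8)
The plan is to deduce the corollary immediately from the preceding theorem, which characterizes extensibility of an order $N$ deformation by the vanishing of its obstruction class $[\mathrm{Ob}_{(\prec_t, \succ_t, \curlyvee_t)}] \in H^3_{\mathrm{NS}}(A, A)$. First I would recall the set-up: given an order $N$ deformation $(\prec_t, \succ_t, \curlyvee_t)$, encoded via the elements $\pi_0 = \pi, \pi_1, \ldots, \pi_N \in \mathrm{Hom}(\mathbb{K}[C_2] \otimes A^{\otimes 2}, A)$ as in \eqref{pi-i}, the equations \eqref{fini-ord-eq} hold, and the right-hand side of \eqref{ord-n-1}, namely $-\sum_{i+j = N+1,\ i,j \geq 1} (\pi_i \circ_1 \pi_j - \pi_i \circ_2 \pi_j)$, is a $3$-cochain in the cochain complex $\{ \mathrm{Hom}(\mathbb{K}[C_\bullet] \otimes A^{\otimes \bullet}, A), \delta_\pi \}$ computing $H^\bullet_{\mathrm{NS}}(A,A)$; as noted there, it is in fact $\delta_\pi$-closed, so it determines a well-defined class $[\mathrm{Ob}_{(\prec_t, \succ_t, \curlyvee_t)}] \in H^3_{\mathrm{NS}}(A, A)$.

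Next I would invoke the hypothesis. Since $H^3_{\mathrm{NS}}(A, A) = 0$, the entire third cohomology group is zero, so in particular the obstruction class $[\mathrm{Ob}_{(\prec_t, \succ_t, \curlyvee_t)}]$ vanishes, no matter which order $N$ deformation we started from. By the previous theorem, the order $N$ deformation $(\prec_t, \succ_t, \curlyvee_t)$ is extensible if and only if this class is trivial; therefore it extends to a deformation of order $N+1$. Because the chosen order $N$ deformation was arbitrary, every finite order deformation of $A$ extends to the next order, which is the claim.

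The only genuinely nontrivial ingredient — and the point I would flag as the main (and only mild) obstacle — is the fact, already used in stating the previous theorem, that $\mathrm{Ob}_{(\prec_t, \succ_t, \curlyvee_t)}$ is a cocycle rather than merely a cochain. This is the NS-algebra analogue of the classical statement that obstructions to extending Gerstenhaber deformations are cocycles, and it follows from the graded (pre-Lie type) identities satisfied by the partial compositions $\circ_i$ of the operad $\mathcal{O}_A$ established in Theorem \ref{thm-new-operad}, combined with the lower order deformation equations \eqref{fini-ord-eq}; this is precisely the kind of routine-but-careful computation the paper chooses to omit. Granting it, the corollary is a one-line consequence of the obstruction theorem, so no further argument is needed.
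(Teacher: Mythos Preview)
Your proposal is correct and matches the paper's approach exactly: the paper states the corollary with no proof, as an immediate consequence of the preceding obstruction theorem, and your argument---the obstruction class lies in $H^3_{\mathrm{NS}}(A,A)=0$, hence vanishes, hence the deformation extends---is precisely that deduction. Your remark that the cocycle property of $\mathrm{Ob}_{(\prec_t,\succ_t,\curlyvee_t)}$ is the only nontrivial ingredient, deferred to the classical theory, also mirrors the paper's own treatment.
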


\begin{remark}
In this section, we mainly concentrate on cohomology and deformations of NS-algebras. In \cite{das-rota} the author constructs a morphism from the cohomology of a Rota-Baxter operator to the cohomology of the corresponding dendriform algebra. In could be interesting to extend this morphism from the cohomology of a twisted Rota-Baxter operator to the cohomology of the corresponding NS-algebra introduced in this paper.
\end{remark}

\section{Appendix}\label{sec-appen}
In this appendix, we recall $L_\infty$-algebras and their Maurer-Cartan theory \cite{lada-markl,lada-stasheff,getzler}. We will follow the sign conventions of \cite{markl}.

Let $L = \bigoplus_i L_i$ be a graded vector space. A multilinear map $l : L^{\otimes k} \rightarrow L$ is said to be skew-symmetric if $l ( x_{\sigma (1)}, \ldots, x_{\sigma (k)} ) = (-1)^\sigma \epsilon (\sigma) l (x_1, \ldots, x_k)$, for $\sigma \in S_k$. Here $\epsilon (\sigma)$ is the Koszul sign.

\begin{defn}
A $L_\infty$-algebra consists of a graded vector space $L = \bigoplus_i L_i$ together with a collection $\{ l_k : L^{\otimes k} \rightarrow L |~ \mathrm{deg}(l_k) = 2-k \}_{k \geq 1}$ of skew-symmetric multilinear maps satisfying the following higher Jacobi identities: for each $n \geq 1$, we have
\begin{align*}
\sum_{i+j = n+1}^{} \sum_{\sigma}^{} (-1)^\sigma \epsilon (\sigma) ~ (-1)^{i (j-1)} ~ l_j   \big(  l_i (x_{\sigma (1)}, \ldots, x_{\sigma (i)}),  x_{\sigma (i+1)}, \ldots, x_{\sigma (n)} \big) = 0,
\end{align*}
where $\sigma$ runs over all $(i, n-i)$-unshuffles with $i \geq 1$.
\end{defn}

An element $\alpha \in L_1$ is called a Maurer-Cartan element in the $L_\infty$-algebra $(L, l_1, l_2, \ldots)$ if $\alpha $ satisfies
\begin{align*}
l_1 (\alpha ) + \frac{1}{2!} l_2 (\alpha, \alpha) - \frac{1}{3!} l_3 (\alpha, \alpha, \alpha ) - \cdots = 0.
\end{align*}

If $\alpha$ is a Maurer-Cartan element, then one can construct a new $L_\infty$-algebra twisted by $\alpha$ on the graded vector space $L$. In particular, if the given $L_\infty$-algebra $(L, l_1, l_2, \ldots)$ has $l_i =0$, for $i=1$ and $i > 3$, then the structure maps for the twisted $L_\infty$-algebra are given by
\begin{align*}
l_1' (x) = l_2 (\alpha, x) - \frac{1}{2}~ l_3 (\alpha, \alpha, x) ,~\quad
l_2' (x,y) =  l_2 (x, y) - l_3 (\alpha, x, y) ~~ \text{ and } ~~
l_3' (x, y, z) = l_3 (x, y, z).
\end{align*}

\vspace*{0.3cm}

\noindent {\bf Acknowledgements.} The research is supported by the fellowship of Indian Institute of Technology (IIT) Kanpur. The author thanks the Institute for support.

\end{document}